\newtheoremstyle{mio}%
{}{} % spazio sopra e sotto%
{\itshape}{} % corpo del testo, indentation
{\bfseries}{.}{ } % titolo del teorema: tipo di testo, divisore, spaziatura
{#1 #2\thmnote{~\mdseries(#3)}} % formattazione nota
\theoremstyle{mio}
\newtheorem{teor}{Theorem}[section]
\newtheorem{cor}[teor]{Corollary}
\newtheorem{prop}[teor]{Proposition}
\newtheorem{lemma}[teor]{Lemma}
\newtheorem{defin}[teor]{Definition}
\newtheoremstyle{definition2}%
{}{} % spazio sopra e sotto%
{}{} % corpo del testo, indentation
{\bfseries}{.}{ } % titolo del teorema: tipo di testo, divisore, spaziatura
{#1 #2\thmnote{\mdseries~ #3}} % formattazione nota
\theoremstyle{definition2}
\newtheorem{ex}[teor]{Example}
\newtheorem{oss}[teor]{Remark}
\title{Smooth sequences and overring operators}
\author{Dario Spirito}
\address{Dipartimento di Scienze Matematiche, Fisiche e Informatiche, Universit\`a di Udine, Udine, Italy}
\email{dario.spirito@uniud.it}
\keywords{Smooth sequences; free groups; invertible ideals; Pr\"ufer domains; overrings}
\subjclass[2020]{13A15; 13F05; 13G05; 20K20}
\newcommand{\class}{\mathcal{C}}
\newcommand{\deriv}{\mathcal{D}}
\newcommand{\Inv}{\mathrm{Inv}}
\newcommand{\V}{\mathcal{V}}
\newcommand{\inverse}{\mathrm{inv}}
\newcommand{\rk}{\mathrm{rk}}
\DeclareMathSymbol{\mh}{\mathord}{operators}{`\-}
\newcommand{\quot}{\mathcal{Q}}
\newcommand{\Crit}{\mathrm{Crit}}
\begin{document}
\begin{abstract}
We introduce smooth sequences of integral domains as well-ordered ascending chains that behave well at limit ordinals. Subsequently, we use this notion to give some conditions on the freeness of kernels of extension maps between groups of invertible ideals of Pr\"ufer domains. We also define overring operators to construct smooth sequences in a recursive way.
\end{abstract}
\maketitle

\section{Introduction}
Let $D$ be an integral domain with quotient field $K$. A \emph{fractional ideal} of $D$ is a $D$-submodule $I$ of $K$ such that $dI\subseteq D$ for some $d\in D\setminus\{0\}$. A fractional ideal is said to be \emph{invertible} if $IJ=D$ for some fractional ideal $J$; the set $\Inv(D)$ of the invertible ideals of $D$ is a group under the product of ideals.

It is in general difficult to determine the structure of $\Inv(D)$; two of the trivial cases are for Dedekind domains (where prime factorization shows that $\Inv(D)$ is free with the maximal space $\Max(D)$ as a basis) and for valuation domains (where $\Inv(D)$ is isomorphic to the value group). Recently, an approximation method has been introduced to analyze more cases, among which we mention almost Dedekind domain \cite{SP-scattered,bounded-almded}, strongly discrete Pr\"ufer domains \cite{InvXD} and rings endowed with a pre-Jaffard family \cite{inv-free}. This method involves setting up a well-ordered chain $\{D_\alpha\}$ of overrings of $D$ corresponding to a well-ordered chain of subgroups of $\Inv(D)$ (more precisely, the subgroup corresponding to $D_\alpha$ is the kernel of the extension homomorphism $\Inv(D)\longrightarrow\Inv(D_\alpha)$), and using a well-known result about smooth sequences of subgroups to prove the freeness of $\Inv(D)$ under suitable hypothesis.

In this paper, we abstract this method by introducing the concept of a \emph{smooth sequences} of rings (or, more generally, of sets) as an ascending well-ordered chain that behaves well at limit ordinals (see Definition \ref{defin:smooth}). We show that a smooth sequence $\{D_\alpha\}$ of overrings of a domain $D$ gives rise to a smooth sequence of subgroups of $\Inv(D)$ (Proposition \ref{prop:smooth-caratt-inv}); in the case of Pr\"ufer domains, we show that smooth sequence of overrings correspond to smooth sequences of open sets of the maximal space, with respect to the inverse topology (Proposition \ref{prop:smooth-caratt}), setting up a link with the construction of the derived sequence of a topological space. We prove three technical criteria that allow to reduce the problem of proving that $\Inv(D)$ is free to the freeness of $\Inv(T)$ for some overring $T$; the first one involves the kernels of the extension morphisms $\Inv(D_\alpha)\longrightarrow\Inv(D_{\alpha+1})$ (Theorem \ref{teor:rings-smooth}), while the other two are more powerful, but apply only in the context of one-dimensional Pr\"ufer domains (Theorems \ref{teor:kernel-free-dvr} and \ref{teor:unione-numerabile}). The proofs of these results are obtained as abstractions of the results used in the previous papers \cite{SP-scattered,bounded-almded,InvXD,inv-free}, and will be used to simplify a more in-depth analysis of the group of invertible ideals in the setting of one-dimensional Pr\"ufer domains \cite{grp-bounded-ratio}.

In Section \ref{sect:ovop} we introduce an easy way to construct smooth sequences by means of iterating an \emph{overring operator}, i.e., a function that associates to a domain $D$ an overring of $D$. We introduce the rank of a domain with respect to an overring operator $\Lambda$ and the rank of $\Lambda$ itself as extensions of the notion of the SP-rank of an almost Dedekind domain (which in turn is inspired by the definition of the Cantor-Bendixson rank of an integral domain; see \cite{SP-scattered}). We also give several examples of overring operators and of the corresponding smooth sequences.

\medskip

Throughout the paper, all rings are commutative and unitary; we shall also only deal with integral domains, i.e., rings without zerodivisors. If $D$ is an integral domain, we denote by $\quot(D)$ its quotient field; an \emph{overring} of $D$ is a ring between $D$ and $\quot(D)$.

If $D,T$ are domains with $D\subseteq T$ (in particular, if $T$ is an overring of $D$) then there is a natural \emph{extension map} 
\begin{equation*}
\begin{aligned}
\Inv(D)  & \longrightarrow \Inv(T),\\
I & \longmapsto IT,
\end{aligned}
\end{equation*}
which is a homomorphism of groups.

An integral domain $D$ is a \emph{valuation domain} if, for every $f\in\quot(D)\setminus\{0\}$, at least one of $f$ and $1/f$ is in $D$. A \emph{Pr\"ufer domain} is an integral domain $D$ such that $D_P$ is a valuation domain for every prime ideal $P$ of $D$; equivalently, $D$ is a Pr\"ufer domain if and only if every nonzero finitely generated (fractional) ideal is invertible. Every overring of a Pr\"ufer domain is a Pr\"ufer domain \cite[Theorem 26.1(a)]{gilmer}. A \emph{discrete valuation domain} (DVR) is a Noetherian valuation domain; an \emph{almost Dedekind domain} is a domain $D$ such that $D_M$ is a DVR for every maximal ideal $M$. An almost Dedekind domain is Pr\"ufer and, if it is not a field, it is one-dimensional.

The \emph{inverse topology} on the spectrum $\Spec(D)$ of $D$ is the topology whose basic open sets are in the form $\V(I):=\{P\in\Spec(D)\mid I\subseteq P\}$, as $I$ ranges among the finitely generated ideals of $D$. We denote by $\Spec(D)^\inverse$ this space, and by $\Max(D)^\inverse$ the maximal space endowed with the inverse topology. The space $\Spec(D)^\inverse$ is always compact; moreover, $\Max(D)^\inverse$ is also Hausdorff and has a basis of clopen subsets, as in this case the inverse topology coincides with the so-called \emph{constructible} (or \emph{patch}) topology \cite[Corollary 4.4.9]{spectralspaces-libro}. See \cite[\textsection 1.3 and 1.4]{spectralspaces-libro} for a deeper study of these two topologies.

\section{Smooth sequences}\label{sect:smooth}
\begin{defin}\label{defin:smooth}
Let $\mathcal{X}=\{X_\alpha\}_{\alpha<\lambda}$ be a well-ordered sequences of sets. We say that $\mathcal{X}$ is a \emph{smooth sequence} if
\begin{itemize}
\item $X_\alpha\subseteq X_\beta$ whenever $\alpha\leq\beta$;
\item $\displaystyle{X_\beta=\bigcup_{\alpha<\beta}X_\alpha}$ for every limit ordinal $\beta$.
\end{itemize}
\end{defin}
One can define an analogous notion with descending sequences, changing $\subseteq$ with $\supseteq$ in the first bullet point above and union with intersection in the second point; in this case, we say that we have a \emph{descending smooth sequence}. When dealing with both notions at the same time, we use the term ``\emph{ascending} smooth sequence'' for what we called above simply ``smooth sequence''.

\begin{ex}
~\begin{enumerate}
\item Let $\{X_n\}_{n\inN}$ be any ascending sequence of sets. Setting $X_\omega:=\bigcup_{n\inN}X_n$, we have that $\{X_n\}_{n\leq\omega}$ is a smooth sequence of sets.
\item Let $\{\alpha\}_{\alpha<\lambda}$ be an initial segment of ordinal numbers. Then, considering ordinal numbers set-theoretically, $\{\alpha\}_{\alpha<\lambda}$ is a smooth sequence.
\item The derived sequence of a topological space is a descending smooth sequence (see Section \ref{sect:ovop} below).
\item Let $\Theta$ be a pre-Jaffard family of an integral domain (see \cite{jaff-derived}). Then, the derived sequence $\{T_\alpha\}$ of $\Theta$ is a smooth sequence \cite[Lemma 7.1]{locpic}.
\end{enumerate}
\end{ex}

Let now $D$ be a Pr\"ufer domain. To any $T\in\Over(D)$ we can associate two subsets of $\Spec(D)$, namely
\begin{equation*}
X(T):=\{P\in\Spec(D)\mid PT=T\}
\end{equation*}
and
\begin{equation*}
Y(T):=\{P\in\Spec(D)\mid PT\neq T\}.
\end{equation*}
These two sets form a partition of $\Spec(D)$; with respect to the inverse topology, $X(T)$ is open and $Y(T)$ is closed. Conversely, if $Y$ is a closed set of $\Spec(D)^\inverse$, we can define an overring by
\begin{equation*}
\Theta(T):=\bigcap\{D_P\mid P\in Y\}.
\end{equation*}
Then, we have $Y(\Theta(Y))=Y$ and $\Theta(Y(T))=T$; thus, overrings of $D$ correspond to closed sets of $\Spec(D)^\inverse$ (see \cite[Proposition 5.2]{Xx}; note that the map $P\mapsto D_P$ establishes a bijection between $\Spec(D)$ and the Zariski space $\mathrm{Zar}(D)$ since $D$ is Pr\"ufer \cite[Theorem 26.1]{gilmer}). In the context of smooth sequences, these ideas give rise to the following criterion.
\begin{prop}\label{prop:smooth-caratt}
Let $D$ be a Pr\"ufer domain and let $\mathcal{D}:=\{D_\alpha\}_{\alpha<\lambda}$ be a well-ordered ascending chain of overrings of $D$. For every $\alpha$, let $X_\alpha:=X(D_\alpha)$ and $Y_\alpha:=Y(D_\alpha)$ (following the notation above). Then, the following are equivalent:
\begin{enumerate}[(i)]
\item\label{prop:smooth-caratt:D} $\mathcal{D}$ is an ascending smooth sequence of rings;
\item\label{prop:smooth-caratt:X} $\mathcal{X}:=\{X_\alpha\}$ is an ascending smooth sequence of sets;
\item\label{prop:smooth-caratt:Y} $\mathcal{Y}:=\{Y_\alpha\}$ is a descending smooth sequence of sets;
\end{enumerate}
\end{prop}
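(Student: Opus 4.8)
The plan is to exploit the order-reversing bijection $T\mapsto Y(T)$ between overrings of $D$ and closed subsets of $\Spec(D)^\inverse$, with inverse $\Theta$, recalled just before the statement. First I would dispose of the monotonicity (first bullet) conditions. Since $\mathcal{D}$ is assumed to be an ascending chain, $D_\alpha\subseteq D_\beta$ holds for $\alpha\le\beta$; and if $PD_\alpha=D_\alpha$, then $1\in PD_\alpha\subseteq PD_\beta$, so $PD_\beta=D_\beta$. Hence $X_\alpha\subseteq X_\beta$ and, taking complements in $\Spec(D)$, $Y_\alpha\supseteq Y_\beta$. Thus the first bullet of each of \eqref{prop:smooth-caratt:D}, \eqref{prop:smooth-caratt:X}, \eqref{prop:smooth-caratt:Y} holds automatically, and the whole proposition reduces to the equivalence of the three limit conditions $D_\beta=\bigcup_{\alpha<\beta}D_\alpha$, $X_\beta=\bigcup_{\alpha<\beta}X_\alpha$ and $Y_\beta=\bigcap_{\alpha<\beta}Y_\alpha$, for every limit ordinal $\beta$. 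Because $X_\alpha$ and $Y_\alpha$ partition $\Spec(D)$, the identity $\Spec(D)\setminus\bigcup_{\alpha<\beta}X_\alpha=\bigcap_{\alpha<\beta}Y_\alpha$ shows at once that the limit condition for $\mathcal{X}$ and the one for $\mathcal{Y}$ are equivalent; this settles \eqref{prop:smooth-caratt:X}$\iff$\eqref{prop:smooth-caratt:Y}.

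The crux is then to relate the limit condition for $\mathcal{D}$ to the one for $\mathcal{Y}$, and the main obstacle is the following lemma: for a limit ordinal $\beta$, writing $T:=\bigcup_{\alpha<\beta}D_\alpha$ (an overring, being a directed union of overrings), one has $Y(T)=\bigcap_{\alpha<\beta}Y_\alpha$. The inclusion $Y(T)\subseteq\bigcap_{\alpha<\beta}Y_\alpha$ is immediate, since $D_\alpha\subseteq T$ forces $PD_\alpha\subseteq PT$, so $PT\ne T$ yields $PD_\alpha\ne D_\alpha$ for every $\alpha$. The reverse inclusion is where the ascending-chain hypothesis does the real work: if $PT=T$, then $1=\sum_{i=1}^n p_it_i$ with $p_i\in P$ and $t_i\in T$; each $t_i$ lies in some $D_{\alpha_i}$, and since the $D_\alpha$ form a chain all of them lie in $D_{\alpha_0}$ for $\alpha_0:=\max_i\alpha_i<\beta$, whence $1\in PD_{\alpha_0}$ and $P\notin Y_{\alpha_0}$. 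The essential point is that the witness for $1\in PT$ is a finite sum, so it is captured by a single member of the chain; this is exactly where directedness of the union is used.

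Finally I would combine the lemma with the injectivity of $Y$ to conclude \eqref{prop:smooth-caratt:D}$\iff$\eqref{prop:smooth-caratt:Y}. The forward implication $D_\beta=T\Rightarrow Y_\beta=Y(T)=\bigcap_{\alpha<\beta}Y_\alpha$ is trivial. Conversely, if $Y_\beta=\bigcap_{\alpha<\beta}Y_\alpha=Y(T)$, then applying $\Theta$ and using the relation $\Theta(Y(\cdot))=\mathrm{id}$ gives $D_\beta=\Theta(Y_\beta)=\Theta(Y(T))=T$. Since the monotonicity conditions were already verified, this establishes that the three limit conditions, and hence the three smoothness statements, are equivalent, completing the argument.
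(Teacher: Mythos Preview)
Your proof is correct, and the implication \eqref{prop:smooth-caratt:D}$\Rightarrow$\eqref{prop:smooth-caratt:X} is essentially the same finite-witness argument the paper uses. The route you take for the converse, however, is genuinely different. The paper proves \eqref{prop:smooth-caratt:X}$\Rightarrow$\eqref{prop:smooth-caratt:D} element by element: for $d\in D_\beta\setminus D$ it introduces the conductor ideal $I:=(D:_Dd)$, observes $\V(I)\cap Y_\beta=\emptyset$, and then uses compactness of $\V(I)$ in the inverse topology together with the finite intersection property of the chain $\{\V(I)\cap Y_\alpha\}$ to produce a single $\overline{\alpha}<\beta$ with $d\in D_{\overline{\alpha}}$. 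You instead prove the single lemma $Y\bigl(\bigcup_{\alpha<\beta}D_\alpha\bigr)=\bigcap_{\alpha<\beta}Y_\alpha$ (again by the finite-witness trick) and then invoke the injectivity of $T\mapsto Y(T)$, equivalently the relation $\Theta(Y(\cdot))=\mathrm{id}$, which the paper establishes just before the statement. Your argument is shorter and avoids handling individual elements of $D_\beta$; the compactness that the paper makes explicit is absorbed into the quoted bijection between overrings and inverse-closed subsets. The paper's version has the minor advantage of being more self-contained at this point, but both are complete.
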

\begin{proof}
Without loss of generality we can suppose that  $D=D_0$; note also that each $D_\alpha$ is a Pr\"ufer domain, as an overring of a Pr\"ufer domain. As $X_\alpha=\Spec(D)\setminus Y_\alpha$, the equivalence of \ref{prop:smooth-caratt:X} and \ref{prop:smooth-caratt:Y} is obvious; thus, we only prove that \ref{prop:smooth-caratt:D} $\iff$ \ref{prop:smooth-caratt:X}.

\ref{prop:smooth-caratt:D} $\Longrightarrow$ \ref{prop:smooth-caratt:X} It is trivial to see that $\mathcal{X}$ is a descending sequence. Let $\beta$ be a limit ordinal and $P\in X_\beta$: then, $PD_\beta=D_\beta$ and thus $1=p_1x_1+\cdots+p_nx_n$ for some $p_i\in P$, $x_i\in D_\beta$. Since $\mathcal{D}$ is smooth, there is an $\alpha<\beta$ such that $x_1,\ldots,x_n\in D_\alpha$: then, $PD_\alpha=D_\alpha$ and $P\in X_\alpha$. Hence $\bigcup_{\alpha<\beta}X_\alpha=X_\beta$, as claimed.

\ref{prop:smooth-caratt:X} $\Longrightarrow$ \ref{prop:smooth-caratt:D} Let $\beta$ be a limit ordinal and let $d\in D_\beta$. If $d\in D$ then $d\in D_\alpha$ for every $\alpha<\beta$. If $d\notin D$, let $I:=(D:_Dd)=d^{-1}D\cap D$: then, $D$ is a finitely generated proper ideal of $D$, and thus $\V(I)$ is an open subset of $\Spec(D)^\inverse$. Since $d\in D_\beta$, we have $ID_\beta=D_\beta$; hence, no $P\in Y_\beta$ can belong to $\V(I)$, that is, $\V(I)\cap Y_\beta=\emptyset$. However, $\{\V(I)\cap Y_\alpha\}$ is a family of closed subsets of $\V(I)$ with the finite intersection property (since $\{Y_\alpha\}$ is a chain), and
\begin{equation*}
\bigcap_{\alpha<\beta} \V(I)\cap Y_\alpha=\V(I)\cap\bigcap_{\alpha<\beta} Y_\alpha=\V(I)\cap Y_\beta=\emptyset.
\end{equation*}
by smoothness. Since $\V(I)$ is closed in the Zariski topology, is also closed in the constructible topology; the latter is Hausdorff, and thus $\V(I)$ is compact in the constructible topology and also in the inverse topology, which is coarser than the constructible topology (see \cite[\textsection 1.3, 1.4]{spectralspaces-libro}). It follows that $\V(I)\cap Y_{\overline{\alpha}}=\emptyset$ for some $\overline{\alpha}<\beta$; hence, $ID_{\overline{\alpha}}=D_{\overline{\alpha}}$ and $d\in D_{\overline{\alpha}}$. Thus $D_\beta=\bigcup_{\alpha<\beta}D_\alpha$ and $\mathcal{D}$ is smooth.
\end{proof}

In particular, the smooth sequences starting from a Pr\"ufer domain $D$ and contained in $\quot(D)$ depend uniquely on $\Spec(D)$, in the following sense. Let $D'$ be another Pr\"ufer domain and suppose there is a homeomorphism $\phi:\Spec(D)\longrightarrow\Spec(D')$. Consider a smooth sequence $\mathcal{D}:=\{D_\alpha\}_{\alpha<\lambda}$ with $D_0=D$ and $D_\alpha\subseteq\quot(D)$ for every $\alpha$: by the previous proposition (and in the same notation), $\mathcal{D}$ correspond to a descending smooth sequence $\{Y_\alpha\}$ of subsets of $\Spec(D)$. If we define
\begin{equation*}
D'_\alpha:=\bigcap_{P\in\phi(Y_\alpha)}D'_P
\end{equation*}
we obtain a smooth sequence $\mathcal{D}':=\{D'_\alpha\}$ with $D'_0=D'$ and $D'_\alpha\subseteq\quot(D')$ for every $\alpha$. Moreover, $\phi$ induces homeomorphisms $\phi_\alpha:\Spec(D_\alpha)\longrightarrow\Spec(D'_\alpha)$ for every $\alpha$, since the prime ideals of $D$ that survive in $D_\alpha$ correspond exactly to the prime ideals of $D'$ that survive in $D'_\alpha$.

\section{Overring operators}\label{sect:ovop}

A classical example of a (descending) smooth sequence of sets is given by the derived sequence of a topological space: if $X$ is a topological space, let $\deriv(X)$ be the set of non-isolated points of $X$. Define $\deriv^0(X):=X$ and, for all ordinal numbers $\alpha\geq 1$, define:
\begin{equation*}
\deriv^\alpha(X):=\begin{cases}
\deriv(\deriv^\beta(X)) & \text{if~}\alpha=\beta+1\text{~is a successor ordinal},\\
\bigcap_{\beta<\alpha}\deriv^\beta(X) & \text{if~}\alpha\text{~is a limit ordinal}.
\end{cases}
\end{equation*}
The \emph{Cantor-Bendixson} rank of $X$ is the smallest ordinal number $\alpha$ such that $\deriv^\alpha(X)=\deriv^{\alpha+1}(X)$.

In this paper, we are mainly interested in smooth sequences of domains, and the easiest way to construct them is to mimic the previous construction by recursively iterating the same operator. We thus introduce the following definition.
\begin{defin}
Let $\class$ be a class of integral domains. We say that a map $\Lambda:\class\longrightarrow\class$, $D\mapsto\Lambda D$ is an \emph{overring operator} on $\class$ if $\Lambda D\in\Over(D)$ for every $D\in\class$.
\end{defin}
In particular, if $\Lambda$ is an overring operator then $D\subseteq\Lambda D$ for all $D\in\class$.

We say that a class $\class$ of integral domains is \emph{closed by overrings} if $\Over(D)\subseteq\class$ for every $D\in\class$. In this case, we can use an overring operator $\Lambda$ on $\class$ to construct, from any $D\in\class$, a smooth sequence $\{\Lambda^\alpha D\}$ in the following way. Fix $D\in\class$ and let $\alpha$ be an ordinal number. We define recursively:
\begin{itemize}
\item $\Lambda^0D:=D$;
\item if $\alpha=\beta+1$ is a successor ordinal, then
\begin{equation*}
\Lambda^\alpha D=\Lambda(\Lambda^\beta D);
\end{equation*}
\item if $\alpha$ is a limit ordinal, then
\begin{equation*}
\Lambda^\alpha D:=\bigcup_{\beta<\alpha}\Lambda^\beta D.
\end{equation*}
\end{itemize}
Note that the union in the last bullet point is a ring, since $\{\Lambda^\beta D\}_{\beta<\alpha}$ is a chain, and is in $\class$, since $\class$ is closed by overrings. It follows that $\{\Lambda^\alpha D\}_\alpha$ is a well-defined chain, and by construction it is a smooth sequence. We call it the \emph{smooth sequence of $D$ associated to $\Lambda$}.

\begin{defin}
Let $\class$ be a class of integral domains that is closed by overrings and let $\Lambda$ be an overring operator over $\class$. For every $D\in\class$, the \emph{$\Lambda$-rank of $D$} is
\begin{equation*}
\Lambda\mh\rk(D):=\min\{\alpha\mid \Lambda^\alpha D=\Lambda^{\alpha+1}D\}.
\end{equation*}
We say that $D$ is \emph{$\Lambda$-scattered} if $\Lambda^\alpha D=\quot(D)$ for some $\alpha$.

We say that an ordinal $\alpha$ is \emph{$\Lambda$-realizable} on $\class$ if there is a $D\in\class$ such that $\Lambda\mh\rk(D)=\alpha$.
\end{defin}
If $\Lambda^\alpha D=\Lambda^{\alpha+1}D$, then we also have $\Lambda^{\alpha+1}D=\Lambda^{\alpha+2}D=\Lambda^{\alpha+3}D=\cdots$ and, consequently, $\Lambda^\beta D=\Lambda^\alpha D$ for every $\beta>\alpha$. It follows that the cardinality of $\Lambda\mh\rk(D)$ is at most equal to the cardinality of $\Over(D)$ or, even more precisely, to the largest cardinality among the well-ordered chains in $\Over(D)$ starting from $D$. In particular, $\Lambda\mh\rk(D)$ is always well-defined.

\begin{defin}
Let $\class$ be a class of integral domains that is closed by overrings and let $\Lambda$ be an overring operator over $\class$.  The \emph{rank of $\Lambda$ over $\class$} is
\begin{equation*}
\rk_\class(\Lambda):=\sup\{\Lambda\mh\rk(D)\mid D\in\class\}
\end{equation*}
with $\rk_\class(\Lambda):=\infty$ if the supremum does not exists, i.e., if arbitrarily large ordinal numbers are realizable as $\Lambda$-ranks.
\end{defin}

\begin{oss}
Let $\class$ be a class of integral domains that is closed by overrings and let $\Lambda$ be an overring operator over $\class$. 
\begin{enumerate}
\item A domain $D$ has $\Lambda$-rank $0$ if and only if $D=\Lambda D$, i.e., if and only if $D$ is a fixed point of $\Lambda$. We call such a domain \emph{$\Lambda$-closed}.
\item On the opposite side, if $\Lambda D=\quot(D)$, we say that $D$ is \emph{$\Lambda$-trivial}.
\item If $\Lambda$ has rank $0$, then every $D\in\class$ is $\Lambda$-closed. Thus the identity is the unique overring operator of rank $0$.
\item An operator $\Lambda$ has rank $1$ if and only if $\Lambda D=\Lambda^2D$ for every $D\in\class$, i.e., if and only if $\Lambda$ is idempotent. If we restrict to operators that are order-preserving (i.e., such that $A\subseteq B$ implies $\Lambda A\subseteq\Lambda B$) then $\Lambda$ has rank $1$ if and only if it restricts to a closure operator on $\Over(D)$ for every $D\in\class$.
\end{enumerate}
\end{oss}

Let now $\Lambda$ be an overring operator on a class $\class$ of Pr\"ufer domains, and fix $D\in\class$. Since the overrings of $D$ correspond to the closed sets of $\Spec(D)^\inverse$ (see Section \ref{sect:smooth} above), we can interpret overring operators topologically.

Let $\mathcal{X}(\Spec(D)^\inverse)$ be the \emph{hyperspace} of $\Spec(D)^\inverse$, i.e., the set of closed subsets of $\Spec(D)^\inverse$. The action of $\Lambda$ on $\Over(D)$ can be be thought of as a map
\begin{equation*}
\Psi:\mathcal{X}(\Spec(D)^\inverse)\longrightarrow\mathcal{X}(\Spec(D)^\inverse):
\end{equation*}
indeed, to a closed set $Z$ we can associate the overring $\Theta(Z)$, and we can set
\begin{equation*}
\Psi(Z):=Y(T_Z),
\end{equation*}
using the notation introduced before Proposition \ref{prop:smooth-caratt}.

The smooth sequence $\{\Lambda^\alpha D\}$ associated to $\Lambda$ can also be recovered from $\Psi$ through a sequence $\{\Psi^\alpha\}$ of self-maps of $\mathcal{X}(\Spec(D)^\inverse)$ defined as follows:
\begin{itemize}
\item $\Psi^0$ is the identity map;
\item if $\alpha=\beta+1$ is a successor ordinal, $\Psi^\alpha:=\Psi\circ\Psi^\beta$;
\item if $\alpha$ is a limit ordinal,
\begin{equation*}
\Psi^\alpha(X)=\bigcap_{\beta<\alpha}\Psi^\beta(X)
\end{equation*}
for every $X\in\mathcal{X}(\Spec(D)^\inverse)$.
\end{itemize}
Under this notation, we have $\Lambda^\alpha D=\bigcap\{D_P\mid P\in\Psi^\alpha(\Spec(D))\}$.

\begin{ex}
~\begin{enumerate}
\item Let $\class$ be the class of all integral domains. Setting $\Lambda D$ to be equal to the integral closure of $D$ in its quotient field, we obtain that $\Lambda:\class\longrightarrow\class$ is an overring operator; the $\Lambda$-closed domains are exactly the integrally closed domains, while only the fields are $\Lambda$-trivial. Since the integral closure of a domain is integrally closed, $\Lambda$ has rank $1$ on $\class$ and is a closure.

\item Let $\class$ be the class of all integral domains, and let $\Lambda D$ be the complete integral closure of $D$. Then, $\Lambda$ is not idempotent; indeed, $\rk_\class(\Lambda)=\omega_1$, the first uncountable ordinal, and  every countable ordinal number is realizable as a $\Lambda$-rank \cite{cic-ordinal}. In fact, the results in \cite{cic-ordinal} show that the same happens if one restricts to the class $\class'$ of the Pr\"ufer domains of dimension at most $2$.

\item\label{ex:derived} Let $\class$ be the class of all Pr\"ufer domains of dimension at most $1$. For every $D\in\class$ that is not a field, define
\begin{equation*}
\Lambda D:=\bigcap_{P\in\deriv(\Max(D)^\inverse)}D_P,
\end{equation*}
where $\deriv(X)$ is the derived set of the topological space $X$ and we consider the empty intersection equal to $\quot(D)$. Then, $\Lambda$ is an overring operator on $\class$, and by the reasoning above we have
\begin{equation*}
\Lambda^\alpha D=\bigcap_{P\in\deriv^\alpha(\Max(D)^\inverse)}D_P
\end{equation*}
for every ordinal number $\alpha$. In particular, $\Lambda\mh\rk(D)$ is equal to the Cantor-Bendixson rank of $\Max(D)^\inverse$. Every ordinal number $\alpha$ is realizable, even in the smaller class of almost Dedekind domains \cite[Section 3]{olberding-factoring-SP}, and thus $\rk_\class(\Lambda)=\infty$.

\item Let $\class$ the class of all almost Dedekind domains. For every $D\in\class$, say that a maximal ideal $M$ of $D$ is \emph{critical} if it does not contain any nonzero radical finitely generated ideal, and denote by $\Crit(D)$ the set of critical ideals of $D$. Then, setting
\begin{equation*}
\Lambda D:=\bigcap_{M\in\Crit(D)}D_M
\end{equation*}
we obtain an overring operator $\Lambda$ on $\class$. The associated smooth sequence $\{\Lambda^\alpha D\}$ has been introduced as the \emph{SP-derived sequence} of $D$ \cite{SP-scattered}, and it always reaches $\quot(D)$ \cite[Theorem 5.1]{bounded-almded}, i.e., every almost Dedekind domain $D$ is $\Lambda$-scattered. Almost Dedekind domains of rank $1$ (thus, such that $\Lambda D=K$, or such that $\Crit(D)=\emptyset$) are called \emph{SP-domains}, and are exactly the integral domains where every ideal can be factored as a finite product of radical ideals \cite{vaughan-SP} (see also \cite[Section 3.1]{fontana_factoring}). Every ordinal number $\alpha$ is realizable as a $\Lambda$-rank \cite{almded-graph}, and thus $\rk_\class(\Lambda)=\infty$.

\item Let $\class$ be the class of all Pr\"ufer domains, and define
\begin{equation*}
\Lambda D:=\bigcap_{P\in\Spec(D)\setminus\Max(D)}D_P
\end{equation*}
for all $D\in\class$. Then, $\Lambda$ is an overring operator, and the prime ideals $P$ of $D$ that explode in $\Lambda D$ are exactly the elements of the interior of $\Max(D)^\inverse$. In particular, a Pr\"ufer domain $D$ is $\Lambda$-closed if and only if $\Spec(D)\setminus\Max(D)$ is dense in $\Spec(D)$, with respect to the inverse topology; this happens, for example, when $D$ is a valuation domain with unbranched maximal ideal. If $V$ is an $n$-dimensional valuation domain, then $V$ is scattered with $\Lambda$-rank $n$. It is not known if $\Lambda$ is scattered when restricted to the class $\class'$ of strongly discrete Pr\"ufer domains; if this is true, the group of invertible ideals of every $D\in\class'$ is a free group (see \cite[Theorem 8.6 and Proposition 9.1]{InvXD}).

\item For an integral domain $D$, let $\mathcal{P}_D$ be the set of all finite products of prime elements of $D$ (including the empty product, which we set equal to $1$). Then, $\mathcal{P}_D$ is a multiplicatively closed set, and the assignment
\begin{equation*}
\Lambda D:=\mathcal{P}_D^{-1}D
\end{equation*}
defines an overring operator on $\class$. A domain $D$ is $\Lambda$-closed if it has no prime elements, while it is $\Lambda$-trivial if and only if it is a UFD (since a domain is a UFD if and only if every nonzero prime ideal contains a prime element \cite[Theorem 5]{kaplansky}).

Consider now the same operator, but on the class $\class_1$ of all Dedekind domains. In this case, $\Lambda D$ has no prime elements \cite{dedekind-noprimes}; therefore, $\Lambda$ is idempotent on $\class_1$, and thus it has rank $1$.

On the other hand, consider $\Lambda$ on the class $\class_2$ of all B\'ezout domains of dimension $1$. Then, a maximal ideal $M$ contain a prime element if and only if it is principal, if and only if it is finitely generated, if and only if $M$ is an isolated point of $\Max(D)^\inverse$. It follows that on $\class_2$ the operator $\Lambda$ coincides with the derived set operator considered in point \ref{ex:derived}, and thus $\rk_{\class_2}(\Lambda)=\infty$. In particular, also $\rk_\class(\Lambda)=\infty$.

\item If $0<\alpha<\rk_{\class}(\Lambda)$, it is possible that $\alpha$ is not $\Lambda$-realizable. For example, let $V$ be an infinite-dimensional valuation domain such that the non-zero prime ideals form a descending chain $P_0\supset P_1\supset\cdots$ (where $P_0$ is the maximal ideal of $V$); then, the overrings of $V$ are $\quot(V)$ and $W_n:=V_{P_n}$ for $n\inN$. Let $\class:=\Over(V)$ and define an overring operator $\Lambda$ on $\class$ by setting $\Lambda W_n:=W_{n+1}$, $\Lambda\quot(V):=\quot(V)$. Then, $\quot(V)$ is the unique $\Lambda$-closed ring and each $W_n$ has $\Lambda$-rank $\omega$; thus, the unique $\Lambda$-realizable ordinal numbers are $0$ and $\omega$.
\end{enumerate}
\end{ex}

\begin{comment}
\begin{defin}
Let $\class$ be a class of Pr\"ufer domains and $\Lambda$ an overring operator on $\class$. We say that $\Lambda$ is \emph{topological} if, for every $A,B\in\class$ and every homeomorphism $\phi:\Spec(A)\longrightarrow\Spec(B)$ there is an homeomorphism $\Lambda\phi:\Spec(\Lambda A)\longrightarrow\Spec(\Lambda B)$ such that the following diagram commutes:
\begin{equation*}
\begin{CD}
\Spec(\Lambda A) @>{\Lambda\phi}>> \Spec(\Lambda B)\\
@V{i}VV    @VV{j}V\\
\Spec(A) @>\phi>> \Spec(B),
\end{CD}
\end{equation*}
where $i:\Spec(\Lambda A)\longrightarrow\Spec(A)$ and $j:\Spec(\Lambda B)\longrightarrow\Spec(B)$ are the canonical restriction maps.
\end{defin}
\end{comment}

\section{Groups of invertible ideals}
In this section, we will use smooth sequences to investigate the freeness of subgroups of invertible ideals, in particular of the kernels of the extension maps. We shall use the following result (see for example \cite[Chapter 3, Lemma 7.4]{fuchs-abeliangroups}); recall that a subgroup $H$ of $G$ is \emph{pure} if $nG\cap H=nH$ for every $n\inN$.
\begin{prop}\label{prop:fuchs}
Let $G$ be an abelian group and let $\{H_\alpha\}_{\alpha<\lambda}$ be a smooth sequence of pure subgroups of $G$. If $\bigcup_\alpha H_\alpha=G$ and $L_\alpha:=H_{\alpha+1}/H_\alpha$ is free for every $\alpha<\lambda$, then $\displaystyle{G=\bigoplus_{\alpha<\lambda}L_\alpha}$. In particular, $G$ is free.
\end{prop}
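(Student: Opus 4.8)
The plan is to prove the statement by transfinite recursion, splitting each one-step extension $H_\alpha\subseteq H_{\alpha+1}$ and then assembling the complements into a direct-sum decomposition of $G$. After the harmless normalization $H_0=\{0\}$ (so that the successor steps account for all the summands), I would first observe that purity is inherited downward along the chain: since $H_\alpha$ is pure in $G$ and $H_\alpha\subseteq H_{\alpha+1}\subseteq G$, the inclusion $nH_{\alpha+1}\cap H_\alpha\subseteq nG\cap H_\alpha=nH_\alpha$ shows that $H_\alpha$ is pure in $H_{\alpha+1}$. The short exact sequence
\[
0\longrightarrow H_\alpha\longrightarrow H_{\alpha+1}\longrightarrow L_\alpha\longrightarrow 0
\]
therefore has pure kernel and free cokernel; as a free group is projective, the sequence splits, so there is a subgroup $F_\alpha\subseteq H_{\alpha+1}$ with $H_{\alpha+1}=H_\alpha\oplus F_\alpha$ and $F_\alpha\cong L_\alpha$. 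I fix such an $F_\alpha$ for every $\alpha<\lambda$.

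The claim is then that $G=\bigoplus_{\alpha<\lambda}F_\alpha$, which yields the statement since $F_\alpha\cong L_\alpha$ and a direct sum of free groups is free. I would establish this by proving, by transfinite induction on $\beta$, that $H_\beta=\bigoplus_{\alpha<\beta}F_\alpha$. The successor step is immediate: if $\beta=\gamma+1$, the splitting $H_\beta=H_\gamma\oplus F_\gamma$ together with the inductive hypothesis $H_\gamma=\bigoplus_{\alpha<\gamma}F_\alpha$ gives the decomposition of $H_\beta$. The limit step is exactly where smoothness enters, since $H_\beta=\bigcup_{\alpha<\beta}H_\alpha$ forces $H_\beta=\bigcup_{\alpha<\beta}\bigoplus_{\gamma<\alpha}F_\gamma=\bigoplus_{\gamma<\beta}F_\gamma$. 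Passing to the top, the hypothesis $\bigcup_{\alpha<\lambda}H_\alpha=G$ then produces the desired global decomposition.

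It remains to verify that the internal sum is genuinely direct, and this is the heart of the argument. Given a relation $f_{\alpha_1}+\cdots+f_{\alpha_k}=0$ with $f_{\alpha_i}\in F_{\alpha_i}$ and $\alpha_1<\cdots<\alpha_k$, I would isolate the top term: since $\alpha_i<\alpha_k$ gives $F_{\alpha_i}\subseteq H_{\alpha_i+1}\subseteq H_{\alpha_k}$ for $i<k$, we obtain $f_{\alpha_k}\in F_{\alpha_k}\cap H_{\alpha_k}=0$, and then descend on $k$. (Equivalently, one may lift a basis of each $L_\alpha$ to $H_{\alpha+1}$ and show the union is a basis of $G$, projecting any finite relation onto the top quotient $L_{\alpha_k}$ to kill it coordinate by coordinate.) The step I expect to be most delicate is the limit stage: freeness of the $L_\alpha$ already secures each individual splitting through projectivity, so the real work — and the role of the smoothness hypothesis — lies in guaranteeing that these partial direct sums assemble coherently at limit ordinals, with purity ensuring the splittings remain compatible with the ambient filtration. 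Once the transfinite induction is carried through, $G=\bigoplus_{\alpha<\lambda}L_\alpha$ follows, and in particular $G$ is free.
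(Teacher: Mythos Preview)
Your argument is correct and follows the standard route for this classical lemma; note, however, that the paper does not give its own proof of this proposition but simply quotes it from Fuchs' \emph{Abelian Groups} \cite[Chapter~3, Lemma~7.4]{fuchs-abeliangroups}, so there is nothing to compare against. One minor comment: you carefully check that $H_\alpha$ is pure in $H_{\alpha+1}$, yet you then (correctly) observe that the splitting of $0\to H_\alpha\to H_{\alpha+1}\to L_\alpha\to 0$ comes from projectivity of the free quotient $L_\alpha$ alone. Purity is therefore not actually used anywhere in your proof, and your closing remark that it ``ensur[es] the splittings remain compatible with the ambient filtration'' is not needed---the transfinite induction goes through without it. In the version of the statement with free quotients the purity hypothesis is in fact redundant; it matters in more general formulations where the $L_\alpha$ are only assumed countable or $\aleph_1$-free.
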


\begin{lemma}\label{lemma:pure}
Let $D$ be an integral domain and $T\in\Over(D)$; let $\phi:\Inv(D)\longrightarrow\Inv(T)$ be the extension map. Then, $\ker\phi$ is pure in $\Inv(D)$.
\end{lemma}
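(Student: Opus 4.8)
The plan is to deduce purity from a cleaner and slightly stronger fact, namely that the quotient group $\Inv(D)/\ker\phi$ is torsion-free. Here $\Inv(D)$ is written multiplicatively, so in the purity condition $nG\cap H=nH$ the symbols $nG$ and $nH$ denote the sets of $n$-th powers $\{I^n\mid I\in G\}$ and $\{J^n\mid J\in H\}$. I would first invoke the standard fact from abelian group theory that a subgroup $H$ of $G$ is automatically pure whenever $G/H$ is torsion-free: if $x=I^n$ lies in $H$, then the image of $I$ in $G/H$ is killed by $n$, hence trivial, so $I\in H$ and $x\in H^n$. By the first isomorphism theorem $\Inv(D)/\ker\phi$ is isomorphic to the image of $\phi$, which is a subgroup of $\Inv(T)$; since any subgroup of a torsion-free group is torsion-free, it suffices to prove that $\Inv(T)$ itself is torsion-free.

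The heart of the argument is therefore the claim that for any integral domain $E$ the group $\Inv(E)$ is torsion-free. I would prove this by a localization argument. Suppose $I\in\Inv(E)$ satisfies $I^n=E$ for some $n\geq 1$. Invertible ideals are locally principal, so for each maximal ideal $M$ of $E$ the localization $IE_M$ is principal, say $IE_M=a_ME_M$ with $a_M\in\quot(E)\setminus\{0\}$. Localizing $I^n=E$ at $M$ gives $a_M^nE_M=E_M$, so $a_M^n$ is a unit of $E_M$; because $E_M$ is a domain, $a_M$ is itself a unit, and hence $IE_M=E_M$. Since a fractional ideal is the intersection of its localizations at the maximal ideals, $I=\bigcap_M IE_M=\bigcap_M E_M=E$. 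Thus $\Inv(E)$ has no nontrivial torsion.

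Combining the two steps yields the lemma: applying the claim to $E=T$ shows $\Inv(T)$ is torsion-free, hence $\Inv(D)/\ker\phi$ is torsion-free and $\ker\phi$ is pure. Unwound, the conclusion amounts to the implication that $I^n\in\ker\phi$ forces $I\in\ker\phi$, since $(IT)^n=(I^n)T=T$ in the torsion-free group $\Inv(T)$ forces $IT=T$. I expect the only genuine content to lie in the torsion-freeness claim, and within it the one step to get right is the passage to localizations — both the local principality of invertible ideals and the recovery of a fractional ideal as the intersection of its localizations; everything else is formal.
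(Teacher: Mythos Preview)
Your argument is correct. Both your proof and the paper's hinge on the same underlying fact, namely that $(JT)^n=T$ forces $JT=T$ (equivalently, that $\Inv(D)/\ker\phi$ is torsion-free), but you reach it by a different route. The paper first reduces to integral ideals and then argues by contradiction using containment in a maximal ideal of $T$: if $JT\neq T$ then $JT$ lies in some $M\in\Max(T)$, whence $I=J^n\subseteq M$ and $IT\neq T$. You instead isolate and prove the general statement that $\Inv(E)$ is torsion-free for \emph{any} domain $E$, via local principality of invertible ideals and the identity $I=\bigcap_M IE_M$. Your route is a bit more conceptual and yields a reusable lemma; the paper's is more hands-on and avoids invoking the local structure of invertible ideals. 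Either way the purity then drops out formally from torsion-freeness of the quotient, exactly as you note.
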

\begin{proof}
Every $I\in\Inv(D)$ can be written as $JL^{-1}$ for some proper ideals $J,L\in\Inv(D)$ (for example, let $a\in D\setminus\{0\}$ such that $aI\subseteq D$ and take $J=aI$, $L=aD$). Hence, we can suppose without loss of generality that $I\subseteq D$.

Let $n\inN$; clearly $n\ker\phi\subseteq n\Inv(D)\cap \ker\phi$. If $I\in n\Inv(D)\cap \ker\phi$, then $IT=T$ and $I=J^n$ for some $J\in\Inv(D)$. If $JT\neq T$, then $J\subseteq M$ for some $M\in\Max(T)$, and thus also $I\subseteq M$ and $IT\neq T$, a contradiction. Thus $JT=T$, i.e., $J\in\ker\phi$ and $I\in n\ker\phi$.
\end{proof}

\begin{prop}\label{prop:smooth-caratt-inv}
Let $D$ be an integral domain and let $\mathcal{D}:=\{D_\alpha\}_{\alpha<\lambda}$ be a well-ordered ascending chain of overrings of $D$. For every $\alpha$, let $\pi_\alpha:\Inv(D_0)\longrightarrow\Inv(D_\alpha)$ be the natural extension map. Consider the following properties:
\begin{enumerate}[(i)]
\item\label{prop:smooth-caratt-inv:ring} $\mathcal{D}$ is a smooth sequence of rings;
\item\label{prop:smooth-caratt-inv:grp} $\mathcal{K}:=\{\ker(\pi_\alpha)\}_{\alpha<\lambda}$ is a smooth sequence of subgroups of $\Inv(D_0)$.
\end{enumerate}
Then, \ref{prop:smooth-caratt-inv:ring} $\Longrightarrow$ \ref{prop:smooth-caratt-inv:grp}. If $D$ is a Pr\"ufer domain, then \ref{prop:smooth-caratt-inv:ring} $\iff$ \ref{prop:smooth-caratt-inv:grp}.
\end{prop}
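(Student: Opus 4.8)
The plan is to treat the two implications separately, after the preliminary observation that in either direction the \emph{ascending} part of smoothness is automatic, so that only the behaviour at limit ordinals needs attention. Indeed, for $\alpha\le\beta$ the map $\pi_\beta$ factors as $\Inv(D_0)\to\Inv(D_\alpha)\to\Inv(D_\beta)$, whence $\ker\pi_\alpha\subseteq\ker\pi_\beta$; thus $\mathcal{K}$ is always an ascending chain of subgroups, while $\mathcal{D}$ is ascending by hypothesis. Both directions therefore reduce to comparing $\ker\pi_\beta$ with $\bigcup_{\alpha<\beta}\ker\pi_\alpha$ (respectively $D_\beta$ with $\bigcup_{\alpha<\beta}D_\alpha$) at a limit ordinal $\beta$, and in each case the nontrivial inclusion expresses that an object living at stage $\beta$ already lives at some earlier stage.

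For \ref{prop:smooth-caratt-inv:ring} $\Rightarrow$ \ref{prop:smooth-caratt-inv:grp}, fix a limit ordinal $\beta$ and $I\in\ker\pi_\beta$, so that $ID_\beta=D_\beta$. Since $\pi_\beta$ is a homomorphism, $I^{-1}$ also lies in $\ker\pi_\beta$, i.e. $I^{-1}D_\beta=D_\beta$. Writing $I=(a_1,\dots,a_n)$ and $I^{-1}=(b_1,\dots,b_m)$ (invertible ideals are finitely generated), the equalities $ID_\beta=I^{-1}D_\beta=D_\beta$ produce expressions $1=\sum_i a_ix_i=\sum_j b_jy_j$ with all $x_i,y_j\in D_\beta$. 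By smoothness of $\mathcal{D}$ and since $\{D_\alpha\}$ is a chain, there is a single $\alpha<\beta$ with all $x_i,y_j\in D_\alpha$; hence $D_\alpha\subseteq ID_\alpha$ and $D_\alpha\subseteq I^{-1}D_\alpha$. Here is the one point that needs care: $1\in ID_\alpha$ does not by itself force $ID_\alpha=D_\alpha$. To conclude I multiply the two inclusions, obtaining $(ID_\alpha)(I^{-1}D_\alpha)=II^{-1}D_\alpha=D_\alpha$; and two fractional ideals that both contain $D_\alpha$ and whose product is $D_\alpha$ must each equal $D_\alpha$. Thus $ID_\alpha=D_\alpha$, i.e. $I\in\ker\pi_\alpha$, giving $\ker\pi_\beta=\bigcup_{\alpha<\beta}\ker\pi_\alpha$. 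Note that this argument uses nothing beyond invertibility, as required for the unconditional implication.

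For the converse when $D$ is Pr\"ufer, I argue that smoothness of $\mathcal{K}$ forces the limit condition on $\mathcal{D}$ by exhibiting, for each element of $D_\beta$, a witnessing element of $\ker\pi_\beta$. Given a nonzero $d\in D_\beta$, set $I:=(D_0:_{D_0}d)=D_0\cap d^{-1}D_0$; since $D_0$ is Pr\"ufer, $I$ is the intersection of two invertible ideals and hence itself invertible, so $I\in\Inv(D_0)$. Because overrings of a Pr\"ufer domain are flat, extension commutes with finite intersection, and for every $\gamma$ one gets $ID_\gamma=D_\gamma\cap d^{-1}D_\gamma$; as $D_\gamma\subseteq d^{-1}D_\gamma$ is equivalent to $d\in D_\gamma$, this shows $ID_\gamma=D_\gamma$ if and only if $d\in D_\gamma$. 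In particular $I\in\ker\pi_\beta$. Smoothness of $\mathcal{K}$ now places $I$ in $\ker\pi_\alpha$ for some $\alpha<\beta$, and the same equivalence yields $d\in D_\alpha$. Hence $D_\beta=\bigcup_{\alpha<\beta}D_\alpha$ and $\mathcal{D}$ is smooth.

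I expect the main obstacle to be conceptual in the forward direction and technical in the converse. In the forward direction the crux is recognising that trivial extension of $I$ cannot be detected from $1\in ID_\alpha$ alone and must be read off simultaneously from $I$ and $I^{-1}$; this is precisely what lets the argument work over an arbitrary domain. In the converse the work is concentrated in the two standard Pr\"ufer facts invoked above---that finitely generated (equivalently, invertible) ideals are closed under intersection, and that overrings are flat, so that extension commutes with intersection \cite{gilmer}---which together make the conductor $I$ a faithful probe for the membership $d\in D_\gamma$ at every stage.
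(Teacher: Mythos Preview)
Your proof is correct in both directions, but the execution differs from the paper's in instructive ways.

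For \ref{prop:smooth-caratt-inv:ring} $\Rightarrow$ \ref{prop:smooth-caratt-inv:grp}, the paper's argument is a touch simpler than yours. Since $A\in\ker\pi_\beta$ gives $AD_\beta=D_\beta$, one has $A\subseteq AD_\beta=D_\beta$, so the generators $a_i$ themselves lie in $D_\beta$. Smoothness then places all of $a_1,\dots,a_n,x_1,\dots,x_n$ in a single $D_\alpha$, whence $AD_\alpha\subseteq D_\alpha$ and $1\in AD_\alpha$ together yield $AD_\alpha=D_\alpha$ directly. Your detour through $I^{-1}$ and the product trick is perfectly valid, but the observation you flag as ``the one point that needs care'' dissolves once the generators of $I$ are themselves placed in $D_\alpha$.

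For the converse under the Pr\"ufer hypothesis, the two routes are genuinely different. The paper argues by contradiction via Proposition~\ref{prop:smooth-caratt}: if $\mathcal{D}$ is not smooth, there is a limit ordinal $\beta$ and a prime $P$ with $PD_\beta=D_\beta$ but $PD_\alpha\neq D_\alpha$ for every $\alpha<\beta$; writing $1=\sum x_iy_i$ with $x_i\in P$ and $y_i\in D_\beta$ produces an invertible $I=(x_1,\dots,x_n)\subseteq P$ in $\ker\pi_\beta$, and smoothness of $\mathcal{K}$ then forces $ID_\alpha=D_\alpha$ for some $\alpha<\beta$, contradicting $I\subseteq P$. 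Your approach instead builds, for each $d\in D_\beta$, the conductor $I=(D_0:_{D_0}d)=D_0\cap d^{-1}D_0$ as an invertible probe, using flatness of Pr\"ufer overrings to ensure $ID_\gamma=D_\gamma\cap d^{-1}D_\gamma$ at every stage, so that $I\in\ker\pi_\gamma$ is equivalent to $d\in D_\gamma$. The paper's route ties the result into the topological characterisation developed earlier; yours is self-contained and bypasses Proposition~\ref{prop:smooth-caratt} entirely, trading that dependence for two standard Pr\"ufer facts (closure of invertible ideals under finite intersection, and flatness of overrings).
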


Note that the last sentence provides an equivalence that can be added to the ones of Proposition \ref{prop:smooth-caratt}.

\begin{proof}
Without loss of generality we can suppose that  $D=D_0$.

Suppose first that $\mathcal{D}$ is smooth; clearly $\mathcal{K}$ is ascending. Let $\beta$ be a limit ordinal and $A=(a_1,\ldots,a_n)\in\ker\pi_\beta$. Then, $A\subseteq D_\beta$ and $AD_\beta=D_\beta$. In particular, $1\in AD_\beta$, and thus there are $x_1,\ldots,x_n\in D_\beta$ such that $1=a_1x_1+\cdots+a_nx_n$. Since $\beta$ is a limit ordinal and $\mathcal{D}$ is smooth, there is an ordinal $\alpha<\beta$ such that $a_1,\ldots,a_n,x_1,\ldots,x_n\in D_\alpha$: then, $A\subseteq D_\alpha$ (as $a_i\in D_\alpha$) and $1\in AD_\alpha$ (as $x_i\in D_\alpha$), so that $AD_\alpha=D_\alpha$, i.e., $A\in\ker\pi_\alpha$. Thus $\mathcal{K}$ is smooth.

Suppose now that $D$ is a Pr\"ufer domain and that $\mathcal{K}$ is smooth. If $\mathcal{D}$ is not smooth, by Proposition \ref{prop:smooth-caratt} there is a limit ordinal $\beta$ and a prime ideal $P$ of $D$ such that $PD_\beta=D_\beta$ but $PD_\alpha\neq D_\alpha$ for every $\alpha<\beta$. Let $x_1,\ldots,x_n\in P$, $y_1,\ldots y_n\in D_\beta$ be such that $1=x_1y_1+\cdots+x_ny_n$: then, $I=(x_1,\ldots,x_n)$ is an invertible proper ideal of $D$ that belongs to $\ker\pi_\beta=\bigcup_{\alpha<\beta}\pi_\alpha$. Thus $ID_\alpha=D_\alpha$ for some $\alpha<\beta$, a contradiction. Hence $\mathcal{D}$ must be smooth, as claimed.
\end{proof}

\begin{oss}
If $D$ is not a Pr\"ufer domain, the two conditions of Proposition \ref{prop:smooth-caratt-inv} are not in general equivalent. Suppose that $F$ is a field and let $Y,Z,X_1,\ldots,X_n,\ldots$ be independent indeterminates over $F$. Let $L:=K(Y,X_1,\ldots,X_n,\ldots)$ and, for every $n$, let $D_n:=K[X_1,\ldots,X_n]+ZL[[Z]]$. Then, each $D_n$ is an overring of $D_0=K+ZL[[Z]]$; moreover, since $K[X_1,\ldots,X_n]$ is a UFD, every invertible ideal of $D_n$ is principal. In particular, the kernel $G_{n,m}$ of every extension map $\Inv(D_n)\longrightarrow\Inv(D_m)$ (with $n<m$) is trivial, since $D_n$ and $D_m$ have the same units.

Let $D_\omega:=K[Y,X_1,\ldots,X_n,\ldots,]$. Similarly, each $G_{n,\omega}:\ker(\Inv(D_n)\longrightarrow\Inv(D_\omega))$ is trivial, and thus $\{G_{0,n}\}_{n<\omega+1}$ is a smooth sequence of subgroups of $\Inv(D_0)$. However, $\{D_n\}_{n<\omega+1}$ is not a smooth sequence of rings, since $\bigcup_{n<\omega}D_n=K[X_1,\ldots,X_n,\ldots,]+ZL[[Z]]$ does not contain $Y$.
\end{oss}

For the remainder of this section, when $\mathcal{D}:=\{D_\alpha\}_{\alpha<\lambda}$ is a smooth sequence of overrings and $\alpha\leq\beta<\lambda$, we set
\begin{equation*}
\pi_{\alpha,\beta}:\Inv(D_\alpha)\longrightarrow\Inv(D_\beta)
\end{equation*}
to be the natural extension map.

We now want to transform Proposition \ref{prop:smooth-caratt-inv} into a sufficient criterion for freeness. We start by studying surjectivity.
\begin{prop}\label{prop:smooth-surjective}
Let $\mathcal{D}:=\{D_\alpha\}_{\alpha<\lambda}$ be a smooth sequence of overrings of $D:=D_0$, and let $\beta<\lambda$. If $\pi_{\alpha,\alpha+1}$ is surjective for every $\alpha<\beta$, then $\pi_{0,\beta}$ is surjective.
\end{prop}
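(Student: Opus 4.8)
My plan is to prove the statement by transfinite induction on $\beta$, using throughout the transitivity of the extension maps: since $(ID_\alpha)D_\beta=ID_\beta$, we have $\pi_{0,\beta}=\pi_{\alpha,\beta}\circ\pi_{0,\alpha}$ whenever $\alpha\le\beta$. The base case $\beta=0$ is trivial, as $\pi_{0,0}$ is the identity. For a successor $\beta=\gamma+1$, the hypothesis makes $\pi_{\gamma,\gamma+1}$ surjective, while the inductive hypothesis (whose assumption, surjectivity of $\pi_{\delta,\delta+1}$ for $\delta<\gamma$, is part of our hypothesis) makes $\pi_{0,\gamma}$ surjective; then $\pi_{0,\gamma+1}$ is surjective as a composition of surjections. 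So everything reduces to the limit case.

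Assume now that $\beta$ is a limit ordinal, and note that the image $H:=\pi_{0,\beta}(\Inv(D_0))$ is a subgroup of $\Inv(D_\beta)$ containing every principal ideal $fD_\beta=\pi_{0,\beta}(fD_0)$ with $f\in\quot(D)\setminus\{0\}$. Given an arbitrary $J\in\Inv(D_\beta)$, I would pick $a\in\quot(D)\setminus\{0\}$ with $aJ\subseteq D_\beta$ and write $J=(aJ)(aD_\beta)^{-1}$, exactly as in the proof of Lemma \ref{lemma:pure}; since $aD_\beta\in H$, it suffices to show that every \emph{integral} invertible ideal $J\subseteq D_\beta$ lies in $H$.

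So let $J\subseteq D_\beta$ be invertible and write $J=(a_1,\ldots,a_n)$ with $a_i\in D_\beta$. From $(a_1,\ldots,a_n)J^{-1}=D_\beta\ni 1$ I obtain $t_1,\ldots,t_n\in J^{-1}$ with $1=a_1t_1+\cdots+a_nt_n$, and all the products $a_it_j$ lie in $JJ^{-1}=D_\beta$. These are finitely many elements of $D_\beta=\bigcup_{\alpha<\beta}D_\alpha$, so---using that $\beta$ is a limit ordinal---there is a single $\alpha<\beta$ with $a_i\in D_\alpha$ and $a_it_j\in D_\alpha$ for all $i,j$. Setting $I:=(a_1,\ldots,a_n)D_\alpha$, the fractional ideal $I':=(t_1,\ldots,t_n)D_\alpha$ gives $II'\subseteq D_\alpha$ (since $II'$ is generated by the $a_it_j$) together with $1=\sum_i a_it_i\in II'$, so $II'=D_\alpha$ and $I\in\Inv(D_\alpha)$ with $ID_\beta=J$. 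Finally, $\alpha<\beta$ lets me invoke the inductive hypothesis to write $I=\pi_{0,\alpha}(I_0)$ for some $I_0\in\Inv(D_0)$, and then $\pi_{0,\beta}(I_0)=ID_\beta=J$, so $J\in H$, completing the induction.

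The hard part is precisely this limit step: it is not enough to observe that the generators of $J$ eventually appear in some $D_\alpha$, because I must also guarantee that the descended ideal $I$ is invertible in $D_\alpha$ rather than merely finitely generated. Carrying the relation $1=\sum_i a_it_i$ and the products $a_it_j$ down to a common $D_\alpha$ is exactly what supplies an explicit inverse for $I$ there, and it is the reason a single application of smoothness must absorb all of this finite data at once.
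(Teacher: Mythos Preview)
Your proof is correct and follows essentially the same approach as the paper's: transfinite induction on $\beta$, with the limit step handled by descending both the generators of $J$ and an explicit inverse relation $1=\sum a_it_i$ (together with the products $a_it_j\in D_\beta$) to some $D_\alpha$ via smoothness, so that the descended ideal is invertible there. The preliminary reduction to integral ideals is harmless but unnecessary---the paper omits it, since the argument that $A':=(a_1,\ldots,a_n)D_\alpha$ is a fractional ideal of $D_\alpha$ works just as well without assuming $a_i\in D_\alpha$.
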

\begin{proof}
We proceed by induction on $\beta$. If $\beta=0$ the claim is trivial. If $\beta=\alpha+1$ is a successor ordinal, then $\pi_{0,\beta}=\pi_{\alpha,\beta}\circ\pi_{0,\alpha}$ is the composition of two surjective maps and thus it is surjective.

Suppose that $\beta$ is a limit ordinal, and let $A=(a_1,\ldots,a_n)D_\beta\in\Inv(D_\beta)$. Then, there is a $B=(b_1,\ldots,b_m)D_\beta\in\Inv(D_\beta)$ such that $AB=D_\beta$; therefore, $a_ib_j\in D_\beta$ for every $i,j$ and there are $x_{ij}\in D_\beta$ such that
\begin{equation*}
1=\sum_{\substack{1\leq i\leq n\\ 1\leq j\leq m}}x_{ij}a_ib_j.
\end{equation*}
Since $\beta$ is a limit ordinal and $\mathcal{D}$ is smooth, there is an $\alpha<\beta$ such that $a_ib_j,x_{ij}\in D_\alpha$ for every $i,j$; in particular, $A':=(a_1,\ldots,a_n)D_\alpha$ and $B':=(b_1,\ldots,b_m)D_\alpha$ are fractional ideals of $D_\alpha$ such that $A'B'\subseteq D_\alpha$ and $1\in A'B'$, i.e., $A'B'=D_\alpha$. In particular, $A'\in\Inv(D_\alpha)$ satisfies $\pi_{\alpha,\beta}(A')=A$. By induction, $A'$ is in the image of $\pi_{0,\alpha}$, and thus $A$ is in the image of $\pi_{0,\beta}=\pi_{\alpha,\beta}\circ\pi_{0,\alpha}$. Thus $\pi_{0,\beta}$ is surjective, as claimed.
\end{proof}

\begin{teor}\label{teor:rings-smooth}
Let $D$ be an integral domain, and let $\{D_\alpha\}_{\alpha<\lambda}$ be a smooth sequence of overrings of $D$ such that $D=D_0$. Suppose that, for every $\alpha<\beta$, the map $\pi_{\alpha,\alpha+1}$ is surjective with free kernel. Then, we have an exact sequence
\begin{equation*}
0\longrightarrow\bigoplus_{\alpha<\beta}\ker\pi_{\alpha,\alpha+1}\longrightarrow\Inv(D)\longrightarrow\Inv(D_\beta)\longrightarrow 0.
\end{equation*}
In particular:
\begin{enumerate}[(a)]
\item $\ker\pi_{0,\beta}$ is free;
\item if $\Inv(D_\beta)$ is free, then $\Inv(D)$ is free.
\end{enumerate}
\end{teor}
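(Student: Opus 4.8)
The plan is to build the exact sequence by combining the smoothness machinery already established with the Fuchs-style result in Proposition \ref{prop:fuchs}, and then read off the two corollaries. First I would set up the relevant smooth sequence of subgroups. By Proposition \ref{prop:smooth-caratt-inv}, since $\mathcal{D}$ is a smooth sequence of rings, the kernels $K_\alpha:=\ker\pi_{0,\alpha}$ form a smooth sequence of subgroups of $\Inv(D)$; by Lemma \ref{lemma:pure}, each $K_\alpha$ is pure in $\Inv(D)$. This is exactly the input Proposition \ref{prop:fuchs} requires, provided I restrict attention to the initial segment $\{K_\alpha\}_{\alpha<\beta+1}$ and identify the whole group correctly.

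The key identification I would establish is that the successive quotients $K_{\alpha+1}/K_\alpha$ are isomorphic to $\ker\pi_{\alpha,\alpha+1}$, so that the freeness hypothesis on $\ker\pi_{\alpha,\alpha+1}$ feeds directly into Proposition \ref{prop:fuchs}. The natural map comes from the factorization $\pi_{0,\alpha+1}=\pi_{\alpha,\alpha+1}\circ\pi_{0,\alpha}$: an element of $K_{\alpha+1}$ maps under $\pi_{0,\alpha}$ into $\ker\pi_{\alpha,\alpha+1}\subseteq\Inv(D_\alpha)$, and the kernel of this composite restricted to $K_{\alpha+1}$ is precisely $K_\alpha$. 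Surjectivity of $\pi_{0,\alpha}$, which holds by Proposition \ref{prop:smooth-surjective} using the hypothesis that all the $\pi_{\alpha,\alpha+1}$ are surjective, guarantees that every element of $\ker\pi_{\alpha,\alpha+1}$ is hit, giving the isomorphism $K_{\alpha+1}/K_\alpha\cong\ker\pi_{\alpha,\alpha+1}$. I expect verifying this isomorphism carefully to be the main technical step, since one must keep track of which group each ideal lives in as it is extended along the chain.

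With this in hand I would apply Proposition \ref{prop:fuchs} to the smooth sequence $\{K_\alpha\}_{\alpha\leq\beta}$ inside the group $K_\beta=\ker\pi_{0,\beta}$: the quotients are free, so $K_\beta=\bigoplus_{\alpha<\beta}\ker\pi_{\alpha,\alpha+1}$ and in particular $\ker\pi_{0,\beta}$ is free, which is part (a). Since $\pi_{0,\beta}$ is surjective by Proposition \ref{prop:smooth-surjective}, the short exact sequence
\begin{equation*}
0\longrightarrow\ker\pi_{0,\beta}\longrightarrow\Inv(D)\longrightarrow\Inv(D_\beta)\longrightarrow 0
\end{equation*}
together with the direct-sum decomposition of the kernel yields the displayed exact sequence. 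Finally, for part (b), if $\Inv(D_\beta)$ is free then the sequence splits, exhibiting $\Inv(D)$ as an extension of a free group by a free group, hence free; alternatively one notes that a group possessing a smooth filtration by pure subgroups with free quotients and free top quotient is itself free. The only delicate point beyond the quotient identification is making sure Proposition \ref{prop:fuchs} is applied to the correct ambient group $K_\beta$ rather than to $\Inv(D)$ directly, since $\bigcup_{\alpha<\beta}K_\alpha=K_\beta$ need not equal $\Inv(D)$.
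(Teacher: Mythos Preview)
Your proposal is correct and uses exactly the same ingredients as the paper: Proposition~\ref{prop:smooth-caratt-inv} and Lemma~\ref{lemma:pure} to get a smooth chain of pure subgroups $K_\alpha=\ker\pi_{0,\alpha}$, Proposition~\ref{prop:smooth-surjective} for surjectivity, the factorisation $\pi_{0,\alpha+1}=\pi_{\alpha,\alpha+1}\circ\pi_{0,\alpha}$ to identify $K_{\alpha+1}/K_\alpha\cong\ker\pi_{\alpha,\alpha+1}$, and Proposition~\ref{prop:fuchs} to assemble the direct sum. The only difference is organisational: the paper runs a transfinite induction on $\beta$, splitting off the successor step via the short exact sequence $0\to K_\alpha\to K_{\alpha+1}\to\ker\pi_{\alpha,\alpha+1}\to 0$ and invoking Proposition~\ref{prop:fuchs} only at limit stages, whereas you apply Proposition~\ref{prop:fuchs} once to the whole chain $\{K_\alpha\}_{\alpha\le\beta}$ inside $K_\beta$; both routes are valid and yield the same conclusion.
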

\begin{proof}
By Proposition \ref{prop:smooth-surjective}, the extension map $\Inv(D)\longrightarrow\Inv(D_\beta)$ is surjective.

We prove the decomposition of $\ker\pi_{0,\beta}$ by induction on $\beta$. If $\beta=0$ the claim is trivial; if $\beta=1$ the freeness of $\ker\pi_{0,1}$ is just an hypothesis. Suppose that the claim holds for every ordinal number strictly smaller than $\beta$. If $\beta=\alpha+1$ is a successor ordinal, we have a chain of group homomorphisms
\begin{equation*}
\Inv(D)\xrightarrow{~~\pi_{0,\alpha}~~}\Inv(D_\alpha)\xrightarrow{~~\pi_{\alpha,\beta}~~}\Inv(D_{\beta+1})
\end{equation*}
whose composition is $\pi_{0,\beta}$; moreover, they are all surjective ($\pi_{0,\alpha}$ and $\pi_{0,\beta}$ by Proposition \ref{prop:smooth-surjective}, $\pi_{\alpha,\beta}$ by hypothesis). As in \cite[Lemma 5.7]{SP-scattered}, we have an exact sequence
\begin{equation*}
0\longrightarrow\ker\pi_{0,\alpha}\longrightarrow\ker\pi_{0,\beta}\longrightarrow\ker\pi_{\alpha,\beta}\longrightarrow 0.
\end{equation*}
Since $\ker\pi_{\alpha,\beta}$ is free, the sequence splits, and an application of the induction hypothesis shows that $\ker\pi_{0,\beta}$ has the claimed decomposition.

Suppose that $\beta$ is a limit ordinal. Using the previous reasoning, for every $\alpha<\beta$ the quotient between $\ker\pi_{0,\alpha+1}$ and $\ker\pi_{0,\alpha}$ is the free group $\ker\pi_{\alpha,\alpha+1}$. The claim now follows applying Proposition \ref{prop:fuchs} to the sequence $\{\ker\pi_{0,\alpha}\}_{\alpha<\beta}$, which is smooth by Proposition \ref{prop:smooth-caratt-inv} (note that each $\ker\pi_{0,\alpha}$ is pure by Lemma \ref{lemma:pure}).

The two ``in particular'' statements are now straightforward.
\end{proof}

\begin{cor}\label{cor:successione}
Let $D$ be an integral domain and $\mathcal{D}:=\{D_\alpha\}_{\alpha\leq\lambda}$ be a smooth sequence starting from $D_0:=D$. If $\pi_{\alpha,\alpha+1}$ is surjective with free kernel for every $\alpha<\lambda$, then $\Inv(D)\longrightarrow\Inv(D_\lambda)$ is surjective with free kernel.
\end{cor}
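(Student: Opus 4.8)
The plan is to read this off directly from Theorem \ref{teor:rings-smooth}. The only subtlety is the indexing: the hypothesis of that theorem is phrased for a smooth sequence $\{D_\alpha\}_{\alpha<\lambda}$ together with a distinguished ordinal $\beta$, and it concerns the successor maps $\pi_{\alpha,\alpha+1}$ for $\alpha<\beta$, whereas here the sequence runs over $\alpha\leq\lambda$. I would simply regard $\mathcal{D}=\{D_\alpha\}_{\alpha\leq\lambda}$ as a smooth sequence indexed by the ordinals strictly below $\lambda+1$, and apply Theorem \ref{teor:rings-smooth} to it with the distinguished ordinal taken to be $\lambda$ itself (which is legitimate, since $\lambda<\lambda+1$ and $D_\lambda$ is a genuine member of the sequence).

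With this matching, the hypothesis of Theorem \ref{teor:rings-smooth}---that $\pi_{\alpha,\alpha+1}$ is surjective with free kernel for every $\alpha<\beta=\lambda$---is exactly the hypothesis of the corollary. The theorem then furnishes the short exact sequence
\begin{equation*}
0\longrightarrow\bigoplus_{\alpha<\lambda}\ker\pi_{\alpha,\alpha+1}\longrightarrow\Inv(D)\longrightarrow\Inv(D_\lambda)\longrightarrow 0.
\end{equation*}
Surjectivity of $\pi_{0,\lambda}\colon\Inv(D)\longrightarrow\Inv(D_\lambda)$ is then immediate from exactness at the right-hand term (alternatively, it follows at once from Proposition \ref{prop:smooth-surjective} applied with $\beta=\lambda$), while the freeness of $\ker\pi_{0,\lambda}$ is precisely conclusion (a) of the theorem. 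This gives both assertions of the corollary.

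There is essentially no obstacle here: the corollary is a repackaging of Theorem \ref{teor:rings-smooth} specialized to the top of the chain. The one point that deserves a line of care is the reindexing just described, namely that taking $\beta=\lambda$ is permissible once the sequence is viewed as indexed by $\alpha<\lambda+1$; everything else is a direct invocation of the theorem and of Proposition \ref{prop:smooth-surjective}.
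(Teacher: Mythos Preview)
Your proof is correct and follows exactly the paper's approach: the paper's own proof is simply ``Immediate from Theorem \ref{teor:rings-smooth}'', and your reindexing of $\{D_\alpha\}_{\alpha\leq\lambda}$ as a sequence over $\alpha<\lambda+1$ with $\beta=\lambda$ is precisely the way to unpack that one-line justification.
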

\begin{proof}
Immediate from Theorem \ref{teor:rings-smooth}.
\end{proof}

\begin{cor}\label{cor:rings-smooth}
Let $D$ be an integral domain. If there is a smooth sequence $\{D_\alpha\}_{\alpha\leq\lambda}$ such that:
\begin{itemize}
\item $D=D_0$ and $\quot(D)=D_\lambda$;
\item $\pi_{\alpha,\alpha+1}:\Inv(D_\alpha)\longrightarrow\Inv(D_{\alpha+1})$ is surjective and has free kernel for every $\alpha<\lambda$;
\end{itemize}
then $\Inv(D)$ is free.
\end{cor}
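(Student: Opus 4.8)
The plan is to reduce everything to Corollary \ref{cor:successione} and then exploit the single extra hypothesis that distinguishes this statement, namely that the chain terminates at the quotient field $\quot(D)=D_\lambda$.

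First, I would apply Corollary \ref{cor:successione} directly. The sequence $\{D_\alpha\}_{\alpha\leq\lambda}$ is smooth, starts at $D_0=D$, and by hypothesis each $\pi_{\alpha,\alpha+1}$ is surjective with free kernel. Corollary \ref{cor:successione} therefore guarantees that the extension map $\pi_{0,\lambda}\colon\Inv(D)\longrightarrow\Inv(D_\lambda)$ is itself surjective with free kernel.

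Second, I would observe that the target group is trivial. Since $D_\lambda=\quot(D)$ is a field $K$, the only nonzero $K$-submodule of $\quot(K)=K$ is $K$ itself, so every invertible fractional ideal of $K$ equals $K$ and $\Inv(K)$ is the trivial group. Hence $\Inv(D_\lambda)=0$, and consequently $\ker\pi_{0,\lambda}=\Inv(D)$. As this kernel is free by the previous step, it follows that $\Inv(D)$ is free.

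There is essentially no obstacle here: the result is a direct specialization of Corollary \ref{cor:successione}. The only ingredient not already supplied by that corollary is the elementary fact that $\Inv$ of a field is trivial, which collapses the surjection $\pi_{0,\lambda}$ onto its kernel and thereby identifies $\Inv(D)$ with a group already known to be free.
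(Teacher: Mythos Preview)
Your proof is correct and follows essentially the same route as the paper: the paper simply observes that $\Inv(D_\lambda)=(0)$ and invokes Theorem~\ref{teor:rings-smooth}, while you invoke its immediate consequence Corollary~\ref{cor:successione} to the same effect.
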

\begin{proof}
Under the hypothesis, $\Inv(D_\lambda)=(0)$. Apply Theorem \ref{teor:rings-smooth}.
\end{proof}

Theorem \ref{teor:rings-smooth} abstracts the proof of several similar results: \cite[Proposition 5.2]{inv-free} (for pre-Jaffard families of rings), \cite[Proposition 5.8]{SP-scattered} (for the sequence associated to critical ideals in almost Dedekind domains) and \cite[Theorems 6.8 and 8.6]{InvXD} (for strongly discrete Pr\"ufer domains); the proofs of these cases work (in the terminology of the present paper) by constructing an overring operator $\Lambda$ such that the kernel of $\Inv(D)\longrightarrow\Inv(\Lambda D)$ is free, and then use the construction in Section \ref{sect:ovop} to associate a smooth sequence to $\Lambda$. In particular, Corollary \ref{cor:rings-smooth} represent the better scenario, where $\Inv(D)$ is approximated well enough by the $\ker\pi_{0,\alpha}$ that we obtain that $\Inv(D)$ itself is free: this case happens, for example, for almost Dedekind domains (see \cite[Theorem 5.1]{bounded-almded}).

Beside its generality, the usefulness of Theorem \ref{teor:rings-smooth} lies in the fact that it can be applied to smooth sequences whose definition is peculiar to the domain under consideration. In the final two theorems of this paper, we show sufficient conditions under which some prime ideals can be ``thrown out'' when studying the freeness of $\Inv(D)$, where $D$ is a one-dimensional Pr\"ufer domain. Recall that $\Theta(Y)$ has been defined before Proposition \ref{prop:smooth-caratt}.
\begin{lemma}\label{lemma:Theta-decomp}
Let $D$ be a one-dimensional Pr\"ufer domain and let $A,B$ be nonempty closed subsets of $\Max(D)^\inverse$.
\begin{enumerate}[(a)]
\item\label{lemma:Theta-decomp:ext} The maximal ideals of $\Theta(A)$ are the extensions of the elements of $A$.
\item\label{lemma:Theta-decomp:free} If $\Inv(\Theta(A))$ and $\Inv(\Theta(B))$ are free, then $\Inv(\Theta(A\cup B))$ is free.
\item\label{lemma:Theta-decomp:jaffard} If $B=\Max(D)\setminus A$, then $\Inv(D)\simeq\Inv(\Theta(A))\oplus\Inv(\Theta(B))$
\end{enumerate}
\end{lemma}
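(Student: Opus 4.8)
The plan is to handle the three parts in order, leaning on the correspondence between overrings and closed subsets of $\Spec(D)^\inverse$ recalled before Proposition \ref{prop:smooth-caratt}, together with the standard theory of overrings of Pr\"ufer domains: for a Pr\"ufer domain $R$, an overring $S$, and a prime $P$ with $PS\neq S$, the extension $P\mapsto PS$ is a bijection from the primes surviving in $S$ to $\Spec(S)$, with contraction as inverse and $S_{PS}=R_P$ (see \cite[Theorem 26.1]{gilmer}). I will also use repeatedly that an extension map $\Inv(R)\to\Inv(S)$ to an overring of a Pr\"ufer domain is \emph{surjective}: any invertible ideal of $S$ is finitely generated, say $(c_1,\dots,c_k)S$, and is the image of the invertible ideal $(c_1,\dots,c_k)R$.

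For part \ref{lemma:Theta-decomp:ext}, set $T:=\Theta(A)$. First I would check that each $M\in A$ survives in $T$: since $T\subseteq D_M$ and $MD_M\neq D_M$, an identity $1=\sum m_it_i$ with $m_i\in M$ and $t_i\in T$ would hold in $D_M$, which is absurd; hence $MT\neq T$, and as $D$ is one-dimensional $MT$ is a nonzero, hence maximal, prime with $T_{MT}=D_M$. For the reverse inclusion I would exploit the closedness of $A$: if $N\in\Max(D)\setminus A$, then there is a finitely generated ideal $I\subseteq N$ with $\V(I)\cap A=\emptyset$, that is $ID_M=D_M$ for all $M\in A$. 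As $I$ is invertible, $ID_M=D_M$ forces $I^{-1}\subseteq D_M$ for each $M\in A$, so $I^{-1}\subseteq T$ and $1\in II^{-1}\subseteq IT$, giving $IT=T$ and therefore $NT=T$. Thus the surviving maximal ideals of $T$ are exactly the elements of $A$, and by the Pr\"ufer correspondence they are precisely the extensions $MT$.

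For part \ref{lemma:Theta-decomp:free}, write $T:=\Theta(A\cup B)=\Theta(A)\cap\Theta(B)$, so that $\Theta(A)$ and $\Theta(B)$ are overrings of $T$; let $\rho_A,\rho_B$ be the corresponding extension maps. By the preliminary remark $\rho_A$ is surjective, and I would show $\rho_B$ restricts to an injection on $\ker\rho_A$: if $I\in\ker\rho_A$ and $\rho_B(I)=\Theta(B)$, then $I$ is locally trivial at every $M\in A$ and every $M\in B$, hence—by part \ref{lemma:Theta-decomp:ext}, which identifies $\Max(T)$ with $A\cup B$—at every maximal ideal of $T$, forcing $I=\bigcap_M ID_M=T$. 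Thus $\ker\rho_A$ embeds in the free group $\Inv(\Theta(B))$ and is itself free, a subgroup of a free abelian group being free. Since $\Inv(\Theta(A))$ is free, the exact sequence $0\to\ker\rho_A\to\Inv(T)\to\Inv(\Theta(A))\to 0$ splits, and $\Inv(\Theta(A\cup B))=\Inv(T)\cong\ker\rho_A\oplus\Inv(\Theta(A))$ is free.

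For part \ref{lemma:Theta-decomp:jaffard}, the hypothesis $B=\Max(D)\setminus A$ makes $A$ and $B$ clopen in $\Max(D)^\inverse$ with $\Max(D)=A\sqcup B$, whence $\Theta(A)\cap\Theta(B)=\Theta(\Max(D))=D$. I would again consider $(\rho_A,\rho_B)\colon\Inv(D)\to\Inv(\Theta(A))\oplus\Inv(\Theta(B))$; injectivity is the computation above, local triviality on $A\cup B=\Max(D)$ forcing the ideal to be $D$. The heart of the matter—and the step I expect to be the main obstacle—is surjectivity, i.e.\ gluing an invertible ideal of $\Theta(A)$ and one of $\Theta(B)$ into an invertible ideal of $D$ with the prescribed local behaviour on each clopen piece. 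This is precisely the statement that the clopen partition $\{A,B\}$ yields a (pre-)Jaffard family $\{\Theta(A),\Theta(B)\}$ of $D$: part \ref{lemma:Theta-decomp:ext} shows that each localization $D_M$ is a localization of exactly one of the two overrings, and the clopen, hence compact, separation of $A$ and $B$—realized by comaximal finitely generated ideals $J_A,J_B$ with $\V(J_A)\cap\Max(D)=A$ and $\V(J_B)\cap\Max(D)=B$—supplies the idempotent-type data needed to patch local ideals. I would therefore conclude by invoking the decomposition $\Inv(D)\cong\bigoplus_i\Inv(T_i)$ for Jaffard families \cite{inv-free}; alternatively one may build the gluing by hand, using compactness of $A$ and $B$ to clear the a priori unbounded poles of a finite generating set across the separating ideals $J_A,J_B$.
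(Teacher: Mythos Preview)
Your proposal is correct and tracks the paper's proof closely for parts \ref{lemma:Theta-decomp:ext} and \ref{lemma:Theta-decomp:jaffard}: in \ref{lemma:Theta-decomp:ext} you unpack by hand what the paper obtains in one line from the bijection $Y\leftrightarrow\Theta(Y)$ between inverse-closed subsets of $\Spec(D)$ and overrings, and in \ref{lemma:Theta-decomp:jaffard} both arguments reduce to the Jaffard-family decomposition (the paper cites \cite[Proposition 7.1]{starloc} rather than \cite{inv-free}).

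The one genuine divergence is in part \ref{lemma:Theta-decomp:free}. The paper simply observes that the diagonal map $\Phi\colon\Inv(\Theta(A\cup B))\to\Inv(\Theta(A))\oplus\Inv(\Theta(B))$ is injective, so $\Inv(\Theta(A\cup B))$ embeds in a free group and is therefore free. You establish the same injectivity (phrased as ``$\rho_B$ is injective on $\ker\rho_A$''), but then go further: using surjectivity of $\rho_A$ you split the exact sequence $0\to\ker\rho_A\to\Inv(T)\to\Inv(\Theta(A))\to 0$ and exhibit $\Inv(T)\cong\ker\rho_A\oplus\Inv(\Theta(A))$ explicitly. Your route buys an actual direct-sum decomposition at the cost of one extra ingredient (surjectivity of the extension map between Pr\"ufer overrings); the paper's route is shorter and needs only the ``subgroup of free is free'' fact, but yields no structural splitting. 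Both are perfectly valid.
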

\begin{proof}
\ref{lemma:Theta-decomp:ext} Let $A\subseteq\Max(D)^\inverse$ be closed: then, $A':=A\cup\{0\}$ is closed in $\Spec(D)^\inverse$. By the correspondence between closed sets of $\Spec(D)^\inverse$ and overrings of $D$, it follows that the elements of the spectrum of $\Theta(A)=\Theta(A')$ are exactly the extensions of the elements of $A'$. In particular, the maximal ideals of $\Theta(A')$ are the extensions of the maximal elements of $A'$, i.e., of the elements of $A$.

\ref{lemma:Theta-decomp:free} By the previous point, we can suppose without loss of generality that $A\cup B=\Max(D)$. Consider the map
\begin{equation*}
\begin{aligned}
\Phi\colon\Inv(D)  &\longrightarrow \Inv(\Theta(A))\oplus\Inv(\Theta(B)),\\
I & \longmapsto (I\Theta(A),I\Theta(B)).
\end{aligned}
\end{equation*}
Then, $\Phi$ is injective: indeed, suppose $I\in\ker\Phi$. Since $I\cap D\in\ker\Phi$ too, we can suppose without loss of generality that $I\subseteq D$. If $I\subsetneq D$, there is a maximal ideal $M$ containing $I$; since $\Max(D)=A\cup B$, moreover, $M$ must belong to $A$ or $B$. In the former case, $I\Theta(A)\subsetneq \Theta(A)$, in the latter $I\Theta(B)\subsetneq\Theta(B)$; in both cases $I\notin\ker\Phi$, a contradiction. Therefore, $\Phi(\Inv(D))$ is isomorphic to a subgroup of $\Inv(\Theta(A))\oplus\Inv(\Theta(B))$, which is free as both summands are free. Thus $\Inv(D)$ is free too.

\ref{lemma:Theta-decomp:jaffard} If $B=\Max(D)\setminus A$ (i.e., if $A$ and $B$ are disjoint) then a maximal ideal cannot survive in both $\Theta(A)$ and $\Theta(B)$; therefore, $\{\Theta(A),\Theta(B)\}$ is a Jaffard family of $D$ (see \cite[Section 6.3]{fontana_factoring} and \cite{starloc}) and thus the map $\Phi$ above is surjective, so that the group $\Inv(D)$ decomposes as claimed \cite[Proposition 7.1]{starloc}.
\end{proof}

\begin{teor}\label{teor:kernel-free-dvr}
Let $D$ be a one-dimensional Pr\"ufer domain and $T$ be an overring of $D$. If $D_P$ is a DVR for every $P\in\Spec(D)$ such that $PT=T$, then the kernel of the extension homomorphism $\Inv(D)\longrightarrow\Inv(T)$ is free.
\end{teor}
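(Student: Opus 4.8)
The plan is to realize $\ker\phi$ as the kernel of the extension map along a smooth sequence of overrings interpolating between $D$ and $T$, and then to invoke Theorem \ref{teor:rings-smooth}. First I would set $A:=Y(T)\cap\Max(D)$; since $D$ is one-dimensional, the zero ideal lies in $Y(T)$ (as $(0)T=(0)\neq T$) while every exploding prime is maximal, so $X(T)=\Max(D)\setminus A$ and $T=\Theta(Y(T))=\Theta(A)$. By hypothesis $D_P$ is a DVR for every $P\in X(T)$. Working in $\Max(D)^\inverse$, which is a Stone space (compact, Hausdorff, with a basis of clopen sets), the set $X:=X(T)$ is open, hence a union of clopen subsets.

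The heart of the construction is a transfinite clopen exhaustion of $X$. I would fix a well-ordered family $\{C_\xi\}_{\xi<\lambda}$ of clopen subsets of $\Max(D)^\inverse$, all contained in $X$, with $\bigcup_{\xi<\lambda}C_\xi=X$, and set $X_\alpha:=\bigcup_{\xi<\alpha}C_\xi$ for $\alpha\leq\lambda$. Each $X_\alpha$ is open, so $Y_\alpha:=\Spec(D)\setminus X_\alpha$ is closed, and by construction $\{X_\alpha\}$ is an ascending smooth sequence of open sets with $X_0=\emptyset$ and $X_\lambda=X$. By Proposition \ref{prop:smooth-caratt}, the associated overrings $D_\alpha:=\Theta(Y_\alpha)$ form a smooth sequence of (Pr\"ufer) overrings with $D_0=D$ and $D_\lambda=\Theta(Y(T))=T$, so that $\pi_{0,\lambda}$ is precisely $\phi$.

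It then remains to check that each successor map $\pi_{\alpha,\alpha+1}\colon\Inv(D_\alpha)\to\Inv(D_{\alpha+1})$ is surjective with free kernel, after which Theorem \ref{teor:rings-smooth}(a) yields that $\ker\phi=\ker\pi_{0,\lambda}$ is free. By Lemma \ref{lemma:Theta-decomp}\ref{lemma:Theta-decomp:ext}, $\Max(D_\alpha)$ is identified (via extension of primes) with the subspace $\Max(D)\setminus X_\alpha$ of $\Max(D)^\inverse$; hence $C:=C_\alpha\setminus X_\alpha$, being the trace of the clopen set $C_\alpha$, is clopen in $\Max(D_\alpha)$, and passing from $D_\alpha$ to $D_{\alpha+1}$ explodes exactly the primes of $C$. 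Applying Lemma \ref{lemma:Theta-decomp}\ref{lemma:Theta-decomp:jaffard} inside $D_\alpha$ to the disjoint clopen (hence closed) decomposition $\Max(D_\alpha)=C\sqcup(\Max(D_\alpha)\setminus C)$ exhibits a Jaffard family, so that $\Inv(D_\alpha)\cong\Inv(\Theta(C))\oplus\Inv(D_{\alpha+1})$; thus $\pi_{\alpha,\alpha+1}$ is the (surjective) projection and $\ker\pi_{\alpha,\alpha+1}\cong\Inv(\Theta(C))$ (the steps with $C=\varnothing$ are identities and are harmless).

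Finally, $\Theta(C)$ is a one-dimensional Pr\"ufer domain whose maximal ideals are extensions of primes in $C\subseteq X$, so all of its localizations are DVRs; that is, $\Theta(C)$ is an almost Dedekind domain, and its group of invertible ideals is therefore free (the scenario recorded after Corollary \ref{cor:rings-smooth}, via \cite[Theorem 5.1]{bounded-almded}). Hence each $\ker\pi_{\alpha,\alpha+1}$ is free and the argument closes. I expect the main obstacle to be exactly the passage from the open set $X$---which need not itself be clopen---to a smooth sequence of clopen ``explosion'' steps: the transfinite exhaustion is what converts the global freeness input for almost Dedekind domains into a step-by-step application of Theorem \ref{teor:rings-smooth}, and the verification that clopen subsets of $\Max(D)$ restrict to clopen subsets of each intermediate $\Max(D_\alpha)$ is the delicate bookkeeping that makes the Jaffard decomposition available at every successor stage.
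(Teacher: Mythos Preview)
Your proposal is correct and follows essentially the same route as the paper: build a smooth sequence from $D$ to $T$ by exhausting the open set $X(T)\subseteq\Max(D)^\inverse$ with a well-ordered family of clopen sets, use the Jaffard decomposition of Lemma~\ref{lemma:Theta-decomp}\ref{lemma:Theta-decomp:jaffard} at each successor step to identify $\ker\pi_{\alpha,\alpha+1}$ with $\Inv$ of an almost Dedekind domain, and conclude via Theorem~\ref{teor:rings-smooth}. The only cosmetic difference is that the paper obtains the clopen family by first well-ordering the points of $X(T)$ and choosing one clopen neighborhood per point, whereas you directly well-order a clopen cover; also, the precise freeness input for almost Dedekind domains is \cite[Proposition~5.3]{bounded-almded} rather than \cite[Theorem~5.1]{bounded-almded}.
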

\begin{proof}
Let $Z:=\{P\in\Spec(T)\mid PT=T\}$, and choose a well-ordering of $Z$, say $Z=\{P_\alpha\}_{\alpha<\lambda}$. Since $Z$ is an open set of $\Max(D)^\inverse$, for every $\alpha$ we can find a clopen subset $Z_\alpha\subseteq Z$ such that $P_\alpha\in Z_\alpha$. In particular, $Z=\bigcup_\alpha Z_\alpha$.

For every ordinal number $\alpha<\lambda$, let $\displaystyle{Y_\alpha:=\Max(D)\setminus\bigcup_{\gamma<\alpha}Z_\gamma}$. Then, $\{Y_\alpha\}_{\alpha\leq\lambda}$ is a descending smooth sequence of closed subsets of $\Spec(D)^\inverse$ with $Y_\lambda=Y$, and thus by Proposition \ref{prop:smooth-caratt} the sequence $\mathcal{D}:=\{D_\alpha\}_{\alpha<\lambda}$ defined by
\begin{equation*}
D_\alpha:=\bigcap\{D_P\mid P\in Y_\alpha\}
\end{equation*}
is a smooth ascending sequence of overrings of $D$. Moreover, since $\bigcap_\alpha Y_\alpha=Y$, the union of all the $D_\alpha$ is $T$. 

Fix now any $\alpha<\lambda$. Then, we have
\begin{equation*}
H:=\{P\in\Max(D_\alpha)\mid PD_{\alpha+1}\neq D_{\alpha+1}\}=\{PD_\alpha\mid P\in Y_\alpha\setminus Z_{\alpha+1}\};
\end{equation*}
in particular, this set is a clopen subset of $\Max(D_\alpha)^\inverse$ and $D_{\alpha+1}=\Theta(H)$. By Lemma \ref{lemma:Theta-decomp}\ref{lemma:Theta-decomp:jaffard}, $\Inv(D_\alpha)\simeq\Inv(D_\alpha)\oplus\Inv(\Theta(H'))$, where $H':=\Max(D_\alpha)\setminus H$, and the kernel of the extension morphism $\Inv(D_\alpha)\longrightarrow\Inv(D_{\alpha+1})$ is isomorphic to $\Inv(\Theta(H'))$.

If $P$ is a maximal ideal of $\Inv(\Theta(H'))$, then $P$ is the extension of a maximal ideal of $Z_\alpha$, and $\Theta(H')_P=D_{P\cap D}$ is a DVR; thus $\Theta(H')$ is an almost Dedekind domain. By \cite[Proposition 5.3]{bounded-almded}, $\Inv(\Theta(H'))$ is free.

The claim now follows from Theorem \ref{teor:rings-smooth}.
\end{proof}

If $T=\quot(D)$, Theorem \ref{teor:kernel-free-dvr} reduces to the statement that $\Inv(D)$ is free when $D$ is an almost Dedekind domain, and thus the theorem is a generalization of \cite[Proposition 5.3]{bounded-almded}. However, the proof of Theorem \ref{teor:kernel-free-dvr} cannot substitute the proof of the almost Dedekind domain case, as the latter is used in the above proof.

\begin{teor}\label{teor:unione-numerabile}
Let $D$ be a one-dimensional Pr\"ufer domain, and let $\{A_n\}_{n\inN}$ be a countable set of clopen subsets of $\Max(D)^\inverse$; let $A:=\bigcup_nA_n$. If $\Inv(\Theta(A_n))$ is free for every $n$, then the kernel of $\Inv(D)\longrightarrow\Inv(\Theta(\Max(D)\setminus A))$ is free.
\end{teor}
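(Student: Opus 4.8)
The plan is to realize $T:=\Theta(\Max(D)\setminus A)$ as the top of an ascending smooth sequence of overrings $\{D_n\}_{n\le\omega}$ starting at $D$, in which the step from $D_n$ to $D_{n+1}$ throws out exactly the primes lying in the $n$-th piece of a disjointification of $\{A_n\}$, and then to invoke Corollary \ref{cor:successione}. The first move is to reduce to the case of pairwise disjoint sets: setting $B_n:=A_n\setminus\bigcup_{k<n}A_k$ yields pairwise disjoint clopen subsets of $\Max(D)^{\inverse}$ with $B_n\subseteq A_n$ and $\bigcup_n B_n=A$. The extension of $B_n$ is a clopen subset of $\Max(\Theta(A_n))^{\inverse}$ (which is homeomorphic to $A_n$ by Lemma \ref{lemma:Theta-decomp}\ref{lemma:Theta-decomp:ext}), so Lemma \ref{lemma:Theta-decomp}\ref{lemma:Theta-decomp:jaffard} applied to the one-dimensional Pr\"ufer domain $\Theta(A_n)$ exhibits $\Inv(\Theta(B_n))$ as a direct summand of the free group $\Inv(\Theta(A_n))$; since every subgroup of a free abelian group is free, $\Inv(\Theta(B_n))$ is free. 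After renaming, I may thus assume the $A_n$ are pairwise disjoint.

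Next I would set up the sequence. For $n\le\omega$ put $Y_n:=\Max(D)\setminus\bigcup_{k<n}A_k$ and $D_n:=\Theta(Y_n)$. Each finite partial union is clopen, so each $Y_n$ is closed, and $\{Y_n\}_{n\le\omega}$ is a descending smooth sequence of closed subsets of $\Spec(D)^{\inverse}$ with $Y_0=\Max(D)$ and $Y_\omega=\Max(D)\setminus A$; here the countability of the family is precisely what lets the finite partial unions exhaust $A$ along an $\omega$-chain. By Proposition \ref{prop:smooth-caratt}, $\{D_n\}_{n\le\omega}$ is an ascending smooth sequence with $D_0=D$ and $D_\omega=\Theta(Y_\omega)=T$.

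It then remains to analyze $\pi_{n,n+1}\colon\Inv(D_n)\to\Inv(D_{n+1})$. By Lemma \ref{lemma:Theta-decomp}\ref{lemma:Theta-decomp:ext} the maximal space of $D_n$ is the extension of $Y_n=Y_{n+1}\sqcup A_n$, a partition into two clopen pieces; applying Lemma \ref{lemma:Theta-decomp}\ref{lemma:Theta-decomp:jaffard} to $D_n$ gives $\Inv(D_n)\simeq\Inv(D_{n+1})\oplus\Inv(\Theta(A_n))$ through $I\mapsto(ID_{n+1},I\Theta(A_n))$, whose first component is $\pi_{n,n+1}$. Hence each $\pi_{n,n+1}$ is surjective with $\ker\pi_{n,n+1}\simeq\Inv(\Theta(A_n))$, which is free by hypothesis (the degenerate stages, where some $A_n=\emptyset$ or $Y_{n+1}=\emptyset$, are trivial). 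Corollary \ref{cor:successione} applied to $\{D_n\}_{n\le\omega}$ then shows that $\Inv(D)\to\Inv(D_\omega)=\Inv(T)$ is surjective with free kernel, which is the assertion.

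The step I expect to demand the most care is bookkeeping across overrings: one must verify that passing from $D$ to the overrings $D_n$ and $\Theta(A_n)$ carries clopen subsets of $\Max(D)^{\inverse}$ to clopen subsets of the relevant maximal spaces, and that $\Theta$ computed inside these overrings agrees with $\Theta$ computed inside $D$ (so that $\Theta_{D_n}$ of the extension of $Y_{n+1}$ is genuinely $D_{n+1}$ and that of $A_n$ is genuinely $\Theta(A_n)$). Only once this compatibility is established may Lemma \ref{lemma:Theta-decomp}\ref{lemma:Theta-decomp:jaffard} be legitimately invoked both in the disjointification and at each successor stage.
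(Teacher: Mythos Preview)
Your proposal is correct and follows essentially the paper's own route: build the $\omega$-indexed smooth sequence of overrings obtained by successively deleting the clopen pieces, identify each step kernel via the Jaffard decomposition of Lemma~\ref{lemma:Theta-decomp}\ref{lemma:Theta-decomp:jaffard}, and conclude with Corollary~\ref{cor:successione}. The only cosmetic difference is your handling of overlaps: you disjointify the $A_n$ up front and use ``subgroup of free is free'' to see that each $\Inv(\Theta(B_n))$ is free, whereas the paper keeps the original $A_n$, works with the cumulative unions $B_n=\bigcup_{k\le n}A_k$, and invokes Lemma~\ref{lemma:Theta-decomp}\ref{lemma:Theta-decomp:free} inductively to obtain freeness of $\Inv(\Theta(B_n))$.
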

\begin{proof}
For every $n\inN$, let $B_n:=\bigcup_{k=0}^nA_k$. For every $n$, $B_n$ is a clopen subset of $\Max(D)^\inverse$. Let $T_n:=\Theta(\Max(D)\setminus B_n)$; then, $\{T_n\}_{n\inN}$ is an ascending chain of overrings of $D$. By Lemma \ref{lemma:Theta-decomp}\ref{lemma:Theta-decomp:jaffard}, the kernel of $\Inv(D)\longrightarrow\Inv(T_n)$ is isomorphic to $\Inv(\Theta(B_n))$; applying Lemma \ref{lemma:Theta-decomp}\ref{lemma:Theta-decomp:free} and induction, we see that each $\Inv(\Theta(B_n))$ is free. 

Let $T:=\Theta(\Max(D)\setminus A)$; we claim that $\bigcup_{n\inN}T_n=T$. Indeed, by Proposition \ref{prop:smooth-caratt} it is enough to show that, for any $P\in\Max(D)$, we have
\begin{equation*}
PT=T\iff PT_n=T_n\text{~for some~}n\inN.
\end{equation*}
However, $PT_n=T_n$ if and only if $P\in B_n$, and thus the second condition above is equivalent to
\begin{equation*}
P\in\bigcup_{n\inN}B_n=\bigcup_{n\inN}A_n=A.
\end{equation*}
By definition of $T$, we have $PT=T$, as claimed.

Therefore, setting $T_\omega:=T$ the sequence $\mathcal{T}:=\{T_n\}_{n\leq\omega}$ is smooth. The claim now follows applying Corollary \ref{cor:successione} to $\mathcal{T}$.
\end{proof}

\end{document}